\theoremstyle{plain}
\declaretheorem[title=Theorem, parent=section]{sa}
\declaretheorem[title=Lemma,sibling=sa]{lem}
\declaretheorem[title=Proposition,sibling=sa]{prop}
\newtheorem*{thm*}{Satz}
\theoremstyle{definition}
\declaretheorem[title=Definition,sibling=sa]{defi}
\numberwithin{equation}{section}
\newcommand{\R}{  \mathbb{R}}
\newcommand{\E}{  \mathbb{E}}
\newcommand{\cE}{  \mathcal{E}}
\newcommand{\cF}{  \mathcal{F}}
\newcommand{\cU}{  \mathcal{U}}
\renewcommand{\epsilon}{\varepsilon}
\newcommand{\norm}[1]{\left\| #1 \right\|}
\newcommand{\bet}[1]{\left| #1 \right|}
\renewcommand{\d}{\, \textnormal{d}}
\newcommand{\akon}{\ensuremath{(2-\alpha)}}
\def\E{{\mathcal E}}
\def\F{{\mathcal F}}
\def\Rscript{{\mathcal R}}
\def\real{{\mathbb R}}
\def\Indicator{{\mathbbm{1}}}
\def\integer{{\mathbb Z}}
\def\ep{\varepsilon}
\def\al{\alpha}
\def\Lam{\Lambda}
\def\grad{\nabla}
\def\Div{\textnormal{div}}
\def\d{\textnormal{d}}
\def\intersect{\bigcap}
\newcommand{\abs}[1]{\left| #1 \right|}
\newcommand{\eps}{\varepsilon}
\renewcommand{\sectionautorefname}{Section}%
\begin{document}
\renewcommand{\sectionautorefname}{Section}
\title{Regularity results for nonlocal parabolic equations}
\author{Moritz Kassmann, Russell W. Schwab}
\subjclass[2010]{Primary 35B65, Secondary 47G20, 60J75}
\keywords{integro-differential operator, nonlocal operator, parabolic equation, Moser iteration, weak Harnack inequality, Hölder regularity}
\thanks{Support by the German Science Foundation DFG (SFB 701) is gratefully
acknowledged.}
\begin{abstract}
We survey recent regularity results for parabolic equations involving nonlocal
operators like the fractional Laplacian. We extend the results of
\cite{FeKa13} and obtain regularity estimates for
nonlocal operators with kernels not being absolutely continuous with respect to
 the Lebesgue measure. 
\end{abstract}

\address{Fakultät für Mathematik\\
Universität Bielefeld \\
Postfach 100131 \\
D-33501 Bielefeld \\
Germany}
\email{moritz.kassmann@uni-bielefeld.de}

\address{Department of Mathematics\\
Michigan State University \\
619 Red Cedar Road \\
East Lansing, MI 48824 \\
United States of America}
\email{rschwab@math.msu.edu}

\date{\today}

\maketitle

%%%  COLORS 

% see the url for descriptions of color
% \url{http://en.wikibooks.org/wiki/LaTeX/Colors}
% 
% \Commentary{This color indicates commentary which is never intended to be in the article, but rather for local discussion between the authors...}
% 
% \RussellTextA{This color indicates my additions in version 1... I'm definitely changing the intro a lot...}
% 
% \RemoveText{the red text indicates REMOVE... e.g. remove this text!}

%%%%%%%%%%%%%%%%%%%%%%%%%%%%%%%%%%%%%%%%%%%%%%%%%%%%
%%%%%%%%%%%%%%%%%%  SECTION %%%%%%%%%%%%%%%%%%%%%%%%
%%%%%%%%%%%%%%%%%%%%%%%%%%%%%%%%%%%%%%%%%%%%%%%%%%%%
%%%%%%%%%%%%%%%%%%%%%%%%%%%%%%%%%%%%%%%%%%%%%%%%%%%%
%%%%%%%%%%%%%%%%%%  INTRO %%%%%%%%%%%%%%%%%%%%%%%%
%%%%%%%%%%%%%%%%%%%%%%%%%%%%%%%%%%%%%%%%%%%%%%%%%%%%

\section{Introduction}

In this article we discuss the regularity properties-- such as
the weak Harnack inequality and H\"older regularity-- for functions, $u:I\times
\R^{d} \to \R$, which are solutions of 
\begin{align}\label{eq:par_equation}
 \partial_t u(t,x) - Lu(t,x) = f(t,x), \qquad (t,x) \in I \times \Omega,
\end{align}
where $\Omega$ denotes a bounded domain in $\R^d$ and $I$ an open,
bounded interval in $\R$. $L$ is an integro-differential operator of the form

\begin{align}\label{eq:def_L}
 L u (t,x)= \operatorname{p.v.}  \int_{\R^d} \left[ u(t,y)-u(t,x)\right]
a(t,x,y)
\mu(x,\d y) \,.
\end{align}

Previous results (discussed in Section \ref{sec:review}) have focused on the
case in which $\mu(x,dy)$ is absolutely continuous with respect to the Lebesgue
measure, and so the key motivation for this note is to prove that in fact
similar results still hold when $\mu$ is only a measure.  Furthermore, we take
this article as an opportunity to present a small survey on a rapidly expanding
and important family of results surrounding the study of regularity properties
of solutions of (\ref{eq:par_equation}).

We first present the set-up and assumptions for (\ref{eq:def_L}) and our main results, and then in Sections \ref{sec:review} and \ref{sec:examples} we provide a somewhat extensive discussion for (\ref{eq:par_equation}).

In \cite{FeKa13} properties of solutions are studied under the assumption that
the measures $\mu(x,\d y)$ are absolutely continuous with respect to the
Lebesgue-measure in $\R^d$. Here we try to avoid this assumption. We assume that
$a: [0,\infty) \times \R^d \times \R^d \to [1, 2]$ satisfies $a(t,x,y) =
a(t,y,x)$ for all $t,x,y$ and $( \mu(x,\cdot) )_{x \in \R^d}$ is a family of
measures satisfying the following assumptions. On one hand, we assume the
symmetry condition that for
every set $A \times B \subset \R^d \times \R^d$ 
\begin{equation}\label{IntroEq:MuSymmetry}
 \int\limits_A \int\limits_B  \mu(x,\d y) \, dx =  \int\limits_B \int\limits_A 
\mu(x,\d y) \, dx \,. 
\end{equation}

On the other hand, we need to impose conditions on the singular behaviour of
the measures $\mu(x,\d y)$ at the diagonal $\{x=y\}$. We assume that for
some $\alpha \in (0,2)$ and some $\Lambda \geq 1$ and every
$x_0 \in \R^d$, $\rho > 0$ and $v \in
H^{\alpha/2}(B_{\rho}(x_0))$
\allowdisplaybreaks
\begin{gather*}
\rho^{-2} \int_{\bet{x_0-y}\leq \rho} \bet{x_0-y}^2 \mu(x_0,\d y) + \int_{\bet{x_0-y}>\rho} \mu(x_0,\d y) \leq \Lambda \rho^{-\alpha}, \tag{K$_1$} \label{eq:K-1}\\
\begin{split}
 \Lambda^{-1}  \iint\limits_{B\,B} \left[ v(x)-v(y)\right]^2 & \mu(x,\d y)  \d x \leq 
(2-\alpha) \iint\limits_{B\,B}   \frac{\left[ v(x)-v(y)\right]^2}{\bet{x-y}^{d+\alpha}} \d x \d y \\
&\leq \Lambda  \iint\limits_{B\,B}  \left[ v(x)-v(y)\right]^2 \mu(x,\d y)  \d x,\quad \text{where }B=B_{\rho}(x_0).  
\end{split}\tag{K$_2$} \label{eq:K-2}
\end{gather*}

{\bf Remark:} We assume these conditions to hold for all $\rho > 0$
because it simplifies our presentation. It would be sufficient to assume
\eqref{eq:K-1}, \eqref{eq:K-2} for small $\rho$, say for all $\rho \in (0,1]$. 

{\bf Remark:} A standard example satisfying the above assumptions is given by
$\mu^{(\alpha)}(x,\d y) = |x-y|^{-d-\alpha} dy$ for some $\alpha
\in
(0,2)$. In this case the constant $\Lambda$ would depend on $\alpha$ and blow
up for $\alpha \to 2-$. This would affect our results because the constants in
our theorems depend (only) on $d$, a lower bound for $\alpha$
and $\Lambda$. If, instead, one chooses $\mu^{(\alpha)}(x,\d y) =
(2-\alpha) |x-y|^{-d-\alpha} dy$, then
assumptions \eqref{eq:K-1}, \eqref{eq:K-2} are satisfied for one fixed
$\Lambda$ and all for all $\alpha$ satisfying $\alpha_0 \leq \alpha < 2$. In
this sense our results are robust, i.e. the constants stay bounded for $\alpha
\to 2-$. This is a natural assumption considering the fact that it is
$\mu(x,dy)=(2-\al)\abs{x-y}^{-d-\al}\d y$, $a(t,x,y)=1$, which ensures
(\ref{eq:def_L}) to converge to $c(d)(-\Delta)u$ as $\al \to 2-$, where $c(d)$
is a dimensional constant.

Here are our main results: 

%%%%%%%%%%%%%%%%%%%%%%%%%%%%%%%%%%%%%%%%%%%%%%%%%%%%
%%%%%%%%%%%%%%%%%%  HARNACK %%%%%%%%%%%%%%%%%%%%%%%%
%%%%%%%%%%%%%%%%%%%%%%%%%%%%%%%%%%%%%%%%%%%%%%%%%%%%

\begin{sa}[weak Harnack inequality] \label{thm:harnack} Assume \eqref{eq:K-1}
and \eqref{eq:K-2} hold true for some $\Lambda \geq 1$ and $\alpha \in
(\alpha_0,2)$. There is a constant $C=C(d,\alpha_0,\Lambda)$ such that for every
supersolution $u$ of \eqref{eq:par_equation} on $Q = (-1,1) \times B_2(0)$ which
is nonnegative in $(-1,1) \times \R^d$ the following inequality holds:
\begin{equation*} \label{eq:harnack-speziell}
 \norm{u}_{L^1(U_\ominus)} \leq C \left( \inf_{U_{\oplus}} u + \norm{f}_{L^\infty(Q)} \right) \tag{HI}
\end{equation*}
where $U_\oplus=\left(1-(\tfrac12)^\alpha,1\right) \times B_{1/2}(0)$, $U_\ominus=\left( -1, -1+(\tfrac12)^\alpha \right) \times B_{1/2}(0)$.
\end{sa}

%%%%%%%%%%%%%%%%%%%%%%%%%%%%%%%%%%%%%%%%%%%%%%%%%%%%
%%%%%%%%%%%%%%%%%%  HÖLDER %%%%%%%%%%%%%%%%%%%%%%%%
%%%%%%%%%%%%%%%%%%%%%%%%%%%%%%%%%%%%%%%%%%%%%%%%%%%%

\begin{sa}[Hölder regularity] \label{thm:hoelder_main} Assume \eqref{eq:K-1}
and \eqref{eq:K-2} hold true for some $\Lambda \geq 1$ and $\alpha \in
(\alpha_0,2)$. There is a
constant $\beta=\beta(d,\alpha_0,\Lambda)$ such that for every solution $u$ of
\eqref{eq:par_equation} in $Q=I \times \Omega$ with $f=0$ and every $Q'\Subset
Q$ the following estimate holds:
\begin{align}\label{eq:hoelder_main}
 \sup_{(t,x),(s,y) \in Q'} \frac{ \bet{u(t,x)-u(s,y)} }{\left( \bet{x-y} +
\bet{t-s}^{1/\alpha} \right)^{\beta}} \leq \frac{\norm{u}_{L^\infty(I \times
\R^d)}}{\eta^\beta} \,, \tag{HC}
\end{align}
with some constant $\eta=\eta(Q, Q') >0$.
\end{sa}

It is worth pointing out the fact that both Theorems
\ref{thm:harnack} and \ref{thm:hoelder_main} only require
(\ref{eq:par_equation}) to hold in some
region $I \times \Omega$, where $\Omega$ may not be the whole space $\real^d$. 
There are only
very few such local results for integro-differential operators in ``divergence
form'' form, see \autoref{subsec:DivergenceForm} for a definition. One of the
standard approaches in variational calulus, proving a local Caccioppoli
inequality, already is nontrivial. Global Caccioppoli inequalities are easier to
obtain, though. Note that in this work we prove and use local Caccioppoli
inequalities for negative and small positive powers of supersolutions only.

Let us briefly comment on the weak Harnack inequality. We recall that
\eqref{eq:harnack-speziell} is called weak
Harnack inequality, cp. \cite[Sec. 1]{Tru68} and would be called Harnack
inequality or strong Harnack inequality if $\norm{u}_{L^1(U_\ominus)}
$ were replaced by $\sup_{U_\ominus} u$. In terms of Hölder
regularity the weak Harnack inequality is as good as the (strong) Harnack
inequality, i.e. both imply the same a priori bounds in Hölder spaces.
However, with regard to other results such as heat kernel bounds one wants to
know when a Harnack inequality holds true. In our general set-up it can easily
be seen that this is not possible. As explained in \autoref{sec:examples} our
assumption allows to study the generator $L$ of a process $X=(X_1,X_2)$ where
$X_i$ are one-dimensional symmetric stable processes. A strong Harnack
inequality cannot hold for such a process, even not in the elliptic case. One
can construct a sequence of functions $u_n:\R^d \to
\R$ which are harmonic in the unit ball $B$ , nonnegative in $\R^d$ but satisfy
$u_n(x_n)/u_n(0) \to +\infty$ for an appropriate sequence $x_n \to 0$. We
remark that such a counterexample exists also in the case where $\mu(x, \d y)$
is of the form $\mu(x, \d y) = j(x-y) \, \d y$ for a specific choice
of the function $j$, see \cite{BoSt05}.

The article is organised as follows: In \autoref{sec:review} we review related
results from the literature on nonlocal parabolic problems. We explain the
notion of nonlocal operators in divergence form and non-divergence form. In
\autoref{sec:examples} we present two examples of measures $\mu(x,\d y)$ which
are not absolutely continuous with respect to the Lebesgue measure in $\R^d$
but satisfy the conditions \eqref{eq:K-1} and \eqref{eq:K-2}. In
\autoref{sec:proofs} we explain the arguments used in the proofs of
\autoref{thm:harnack} and \autoref{thm:hoelder_main}. We concentrate on the
ideas and refer to \cite{FeKa13} for technical details.

\bigskip

{\bf Acknowledgement:} We thank Matthieu Felsinger for several helpful
comments.

%%%%%%%%%%%%%%%%%%%%%%%%%%%%%%%%%%%%%%%%%%%%%%%%%%%%
%%%%%%%%%%%%%%%%%%  SECTION %%%%%%%%%%%%%%%%%%%%%%%%
%%%%%%%%%%%%%%%%%%%%%%%%%%%%%%%%%%%%%%%%%%%%%%%%%%%%
%%%%%%%%%%%%%%%%%%%%%%%%%%%%%%%%%%%%%%%%%%%%%%%%%%%%
%%%%%%%%%%%%%%%%%%  REVIEW %%%%%%%%%%%%%%%%%%%%%%%%
%%%%%%%%%%%%%%%%%%%%%%%%%%%%%%%%%%%%%%%%%%%%%%%%%%%%

\section{A short survey on results for parabolic equations
involving nonlocal operators}\label{sec:review}

For a start let us look at a simple example. Choose $a(t,x,y) \mu(x,\d y) =
|x-y|^{-d-\alpha} dy$, where $\d y$ denotes the Lebesgue measure. For this
choice and for smooth functions $u$, say $u\in C^\infty_c(\R \times \R^d)$, 
$Lu$ equals $-c(d,\alpha) (-\Delta)^{\alpha/2}u$ where we use 
\begin{align}\label{eq:def_frac_lapl}
 - (- \Delta)^{\alpha/2} u (t,x) = \frac{2^{\alpha}
\Gamma(\frac{d+\alpha}{2}
)}{\pi^{d/2} |\Gamma(\frac{-\alpha}{2})|} \lim\limits_{\eps \to 0+}
\int_{|y-x|>\eps} \frac{ u(t,y)-u(t,x)}{ |y-x|^{d+\alpha}} \,\d y \,. 
\end{align}
This definition implies for $v \in C^\infty_c(\R^d)$ 
\begin{align*}
 \widehat{(- \Delta)^{\alpha/2}  v}(\xi) = |\xi|^\alpha
\widehat{v} (\xi) \,. 
\end{align*}
Thus $(-\Delta)^{\alpha/2}$ is a
pseudodifferential operator with symbol $|\xi|^{\alpha}$ which justifies the
use of the symbol and the name ``fractional Laplacian''. In place of
$- (-\Delta)^{\alpha/2}$ we write $ \Delta^{\alpha/2}$. The precise value of
the constant $c(d,\alpha)^{-1}$ in \eqref{eq:def_frac_lapl} is not important but
the following property is: The quantity $c(d,\alpha)^{-1} \alpha
(2-\alpha)$ remains positive and bounded for $\alpha \to 0+$ and $\alpha \to
2-$. 

Thus the equation \eqref{eq:par_equation} reduces in
the most simple case to
\begin{align}\label{eq:heat_eq_frac} 
\partial_t u(t,x) - \Delta^{\alpha/2} u (t,x) = f(t,x), \qquad
(t,x) \in (0,\infty) \times \R^d \,, 
\end{align}
In the case $f = 0$ the solutions can be obtained as
\[ u(t,x) = \int_{\R^d} \Phi^{(\alpha)}(t,x;t_0,y) u_0(y) \,\d y = \int_{\R^d}
p^{(\alpha)}(t,x-y,0) u_0(y) \,\d y \,, \]
when $u_0(x) = u(t_0,x)$, $\Phi^{(\alpha)}$ is the fundamental solution and
$p^{(\alpha)}(t,x,y)= \Phi^{(\alpha)}(t,x;0,y)$. The function $p^{(\alpha)}$ is
often referred to as the heat kernel of the corresponding semi-group. Only in
the case $\alpha =1$ an explicit expression for $p^{(\alpha)}(t,x,y)$ is known.
But one can prove that there is a continuous function $\phi^{(\alpha)}:\R^d \to
(0,\infty)$ which is rotationally invariant and satisfies $c^{-1}
|x|^{-d-\alpha} \leq \phi^{(\alpha)}(|x|) \leq c |x|^{-d-\alpha}$ for some
constant $c \geq 1$ and all $x$ with $|x|>1$ such that for all $t>0$ and all
$x,y \in \R^d$
\begin{align*}
 p^{(\alpha)}(t,x,y) = t^{-d/\alpha} p^{(\alpha)}(1, t^{-1}x, t^{-1}x) =
t^{-d/\alpha} \phi^{(\alpha)}(t^{-1} (x-y)) \,. 
\end{align*}
As a result we obtain for all $t>0$ and all $x,y \in \R^d$
\begin{align}\label{eq:hk_bound_stable}
 p^{(\alpha)}(t,x,y) \asymp t^{-d/\alpha} \left( 1 \wedge
\frac{t^{1/\alpha}}{|x-y|} \right)^{d+\alpha}
\end{align}
Here, the symbol $\asymp$ denotes that the quotient of the two expression
involved stays positive and bounded. 

It is worthwile to compare the situation for nonlocal operators with the one
for local operators. For $\alpha =2$ the nonlocal problem
\eqref{eq:heat_eq_frac} becomes the classical heat equation with  
\begin{align*}
 p^{(2)}(t,x,y) = \frac{1}{(4\pi t)^{d/2}} e^{\frac{-|x-y|^2}{4t}} \,.
\end{align*}
One of the main results in the theory of elliptic differential operators of
second order and in divergence form are the so called Aronson bounds. Every
operator of the form $u \mapsto -\operatorname{div}(A(\cdot) \nabla u)$, where
$A(x)$ are uniformly positive definite matrices possesses a heat kernel $q$
which satisfies the following bounds. There are positive constants $c_1, c_2,
c_3, c_4$ such that for all $t>0$ and all $x,y \in \R^d$
\begin{align*}
 c_1 p^{(2)}(t,c_2|x-y|) \leq q(t,x,y) \leq c_3 p^{(2)}(t,c_4|x-y|) \,.
\end{align*}
This means that the heat kernel for the classical heat equation controls the
heat kernels for nondegenerate parabolic problems. Below we provide details of
how this phenomenon remains true for nonlocal problems.
 
A second result, and this is most important for the study of nonlinear
problems, are a-priori Hölder estimates for solutions to parabolic equations
involving the operator $u \mapsto -\operatorname{div}(A(\cdot) \nabla u)$ where
the dependence of $A(x)=(a_{ij}(x))$ on $x$ is only measurable and
bounded. This part of the theory is due to De Giorgi, Nash and Moser. 

During the last years there have been several attempts to work out a similar
program for nonlocal operators which generalise the fractional Laplacian like
the operator $u \mapsto -\operatorname{div}(A(\cdot) \nabla u)$ generalises the
classical Laplacian. We expand upon this in
\autoref{subsec:DivergenceForm} which discusses divergence form operators.

Many cases involving operators similar to the fractional Laplacian also fall
into the category of non-divergence form operators, and there has also been much
recent progress investigating regularity properties similar to Theorems
\ref{thm:harnack} and \ref{thm:hoelder_main} above.  We mention these results in
\autoref{subsec:NonDivergenceForm}.

\subsection{Divergence Form Operators}\label{subsec:DivergenceForm}
Let us review some results from the literature which
concern regularity issues for solutions to equations similar to
\eqref{eq:par_equation}.  By ``divergence form'', we mean operators
defined as (\ref{eq:def_L}) if the corresponding energy form is well-defined. In
this case solutions are defined with the help
of bilinear forms.  If (\ref{IntroEq:MuSymmetry}) holds, then the bilinear
forms are symmetric. 

Note that all previous results in this direction assume
the measures $\mu(x,\d y)$ to be absolutely continuous with respect to the
Lebesgue-measure on $\R^d$. Thus, for the range of this little survey, we assume
that for some appropriate function $k:(0,\infty) \times \R^d \times \R^d \mapsto
[0,\infty)$ and all $t>0$, $x,y \in \R^d$
\begin{align*}
a(t,x,y) \mu(x,\d y) = k(t,x,y) \,\d x \, \d y\,, \quad k(t,x,y)=k(t,y,x) 
\end{align*}

The method of Nash is applied to equation \eqref{eq:par_equation} by Komatsu in
\cite{Kom88, Kom95}. The author assumes $k(t,x,y)$ to be positive, continuous in
$t$ and to satisfy pointwise bounds of the form $k(t,x,y) \asymp
|x-y|^{-d-\alpha}$ for small values of $|x-y|$. The main results are the
existence and Hölder regularity of a fundamental solution together with some
pointwise estimates. Note that Nash's method is a global one, i.e. one works in
whole of $\R^d$.  

An important contribution to theory is \cite{BaLe02}. Bass and Levin prove
sharp pointwise bounds on the heat kernel and a Harnack inequality. The state
space is $\mathbb{Z}^d$ but this does not constitute a serious limitation.
The upper bounds are proved using Davies' method. The lower bounds are based on
the Harnack inequality which is derived with the help of Krylov-Safonov type
estimates. The existence of a related stochastic process and its
properties are essential for this part of the proof. It is assumed in
\cite{BaLe02} that the function $k(t,x,y)$ is constant in $t$, i.e.
$k(t,x,y)=k'(x,y)$ with $k'(x,y) \asymp |x-y|^{-d-\alpha}$ for all values of
$|x-y|$.   

The results of \cite{BaLe02} are extended in \cite{ChKu03} to a general
$d$-set $(F,\nu)$. Moreover, the theory of Dirichlet forms is applied
consistently which allows to work with the weak formulation as it should be. Chen and
Kumagai show that, under the assumption $k(t,x,y)=k'(x,y)$ with $k'(x,y) \asymp
|x-y|^{-d-\alpha}$ there exists a Feller process that corresponds with the
Dirichlet form $(\cE, \cF)$ given by
\begin{align}
\cE(u,v) &= \iint\limits_{F\, F} \left( u(y) - u(x)\right)\left( v(y) -
v(x)\right) k'(x,y) \nu(\d x) \nu(\d y)\,, \\
\cF &= \Big\{ w \in L^2(F,\nu) |\, \iint\limits_{F\, F} \frac{\left( w(y) -
w(x)\right)^2}{|x-y|^{d+\alpha}} \nu(\d x) \nu(\d y) < +\infty \Big\}\,.
\end{align}
It is shown in \cite{ChKu03} that the heat kernel $p(t,x,y)$ exists and
satisfies  
\begin{align}\label{eq:hk_bound_chen-kumagai}
 p(t,x,y) \asymp t^{-d/\alpha} \left( 1 \wedge
\frac{t^{1/\alpha}}{|x-y|} \right)^{d+\alpha}
\end{align}
for all $x,y \in F$ and $0 < t \leq 1$. This is an Aronson bound for nonlocal
oerators, cp. \eqref{eq:hk_bound_stable}. 

A more general situation concerning the regularity at $|x-y|=0$ is treated by Barlow, Bass, Chen and the first author in
\cite{BBCK09}. Again, $k(t,x,y)$ is assumed to be constant
in $t$, i.e. $k(t,x,y)=k'(x,y)$ but may satisfy  
\begin{align}\label{eq:assum_bbck} 
c_0 |x-y|^{-d-\alpha} \leq k'(x,y) \leq  c_1 |x-y|^{-d-\beta}\,, \quad  |x-y|
\leq 1  
\end{align}
for some positive constants $c_0,c_1,\alpha, \beta$ with $0 < \alpha \leq \beta
< 2$. The existence of a heat kernel $p(t,x,y)$ is shown together with some
upper and lower bounds which hold true outside of some exceptional set. Since
$k$ does not have upper and lower bounds which allow for the same type of
scaling, these results cannot be used in order to deduce regularity results.
Quite contrary to this, \cite{BBCK09} provides an example of a kernel satisfying
\eqref{eq:assum_bbck} and a corresponding discontinuous harmonic function.
Moreover it is shown that the martingale problem is not well posed in general
under the condition \eqref{eq:assum_bbck}.

The situation is better if the exponent $\alpha$ depends on $x$ and has some
regularity. In \cite{BKK10} the main
assumption is 
\begin{align}\label{eq:assum_bkk} 
c_0 |x-y|^{-d-(\alpha(x)\wedge \alpha(y))} \leq k'(x,y) \leq  c_1
|x-y|^{-d-(\alpha(x)\vee \alpha(y))} \,, \quad |x-y|
\leq 1  
\end{align}
for some positive constants $c_0, c_1$ and a Dini-continuous function $\alpha$
taking values in some closed subinterval of $(0,2)$. Bass, Kumagai and the
first author prove bounds of the heat kernel and a priori Hölder
estimates.  

As mentioned in the introduction, the results of this article are based
on the work by Felsinger and the first author in \cite{FeKa13}. Thus we
do not comment on \cite{FeKa13} here in greater detail. Several of the technical
ingredients used in this work and in \cite{FeKa13} are
based on ideas in \cite{Kas09}. The first author establishes a Moser iteration
scheme for elliptic operators corresponding to \eqref{eq:par_equation} leading
to a weak Harnack inequality and Hölder regularity estimates. This article for
the first time establishes a PDE approach to local regularity for nonlocal
operators with measurable coefficients. Moreover, the results are robust, i.e.
the constants in the assertion stay positive and bounded for $\alpha \to 2-$. 
 
In \cite{CCV11} Caffarelli, Chan, and Vasseur prove H\"older regularity for weak
solutions of (\ref{eq:par_equation}) in all of $\real^d$.  The methods follow
the spirit of De Giorgi's
program, but there are non-trivial modifications to account for the nonlocality
of $L$.  Their estimates are global in the sense that all of the quantities
estimated require the equation and the behavior of $u$ to hold globally. Their results
are slightly different than the ones of \cite{FeKa13} in the sense that the
H\"older continuity is controlled in terms of $\norm{u}_{L^2(\real^d)}$
instead of $\norm{u}_{L^\infty(\real^d)}$, and they are not robust as $\al\to2-$.  \cite{CCV11} also provides an important 
justification of how in the nonlocal setting, results such as \cite[Theorem 2.2]{CCV11}, \cite[Theorem 1.2]{FeKa13}, 
and Theorem \ref{thm:hoelder_main} above, also apply to fully nonlinear equations as well as give $C^{1,\al}$ regularity 
for nonlinear, translation invariant operators which could be of the form
\[
F u (t,x)= \operatorname{p.v.}  \int_{\R^d} \phi'\left( u(t,y)-u(t,x)\right)
\mu(x,\d y),
\] 
assuming $F$ is translation invariant (typically shown as $\mu(x,dy)=K(x-y)dy$) and $\phi$ is a convex function satisfying some mild regularity and coercivity assumptions.  Cf. \cite[Section 13]{CaSi-09RegularityByApproximation} and \cite{CaSi-09RegularityIntegroDiff} for the non-divergence elliptic setting.

It is worth mentioning a recent use of regularity results for
equations similar to (\ref{eq:par_equation}) 
to random matrix theory and random Schr\"odinger equations. Erdös and Yau
\cite{ErYa13} apply the ideas of \cite{CCV11} to discrete situations. In our
context their assumption, cf. \cite[Thm 9.1]{ErYa13},
would allow $k(t,x,y)$ to be unbounded as a function of $t$ which is not
included in our approach.  In
\cite{Gome-2012RadiativeTransLongRangeCor}, Gomez uses a priori estimates for a 
hypo-elliptic version of (\ref{eq:par_equation}) for studying the limits of a
random Schr\"odinger equation to a deterministic nonlocal radiative transfer
equation. A very different application of \autoref{thm:harnack} appears in
\cite{JaWe13} where the authors study the asymptotic symmetry of solutions to
nonlinear fractional reaction diffusion equations. In \cite{JiXi13} a Harnack
inequality is used to study a fractional version of the Yamabe flow.

It is important to point out that this little survey is by no means complete.
Several
interesting areas are neglected resp. not treated according to their presence
in journals. One such area is given by heat kernel bounds for more general nonlocal
Dirichlet forms on general state spaces. The articles \cite{ChKu08},
\cite{BBK09}, \cite{CKK11} extend \cite{ChKu03} and treat regularity and
estimates of the heat kernel. For a general introduction to
off-diagonal heat kernel bounds involving non-Gaussian tails, see \cite{GrKu08}.
A detailed study of off-diagonal upper bounds can be found in \cite{BGK09}. A
recent work is \cite{GHL13}. Therein, the authors derive off-diagonal upper
bounds from on-diagonal ones. A key feature is that the walk dimension of the
underlying space is allowed to exceed $2$.

Another area concerns heat kernels for nonlocal operators in bounded
and unbounded domains or equivalently transition densities for jump processes in
such domains. In \cite{CKS10b} sharp two-sided estimates
for the transition density (heat kernel) of the symmetric $\alpha$--stable
process (the fractional Laplacian) killed upon exiting a $C^{1,1}$ open set
$\Omega \subset \R^d$ are obtained. Sharp bounds for the heat kernel up to the
boundary of a bounded domain $\Omega$ in the so called censored case, i.e. when
the integral in \eqref{eq:def_L} is taken over $\Omega$ instead of the full
space, are proved in \cite{CKS10}.

\subsection{Non-Divergence Form Operators}\label{subsec:NonDivergenceForm}

By a ``non-divergence form'' operator in (\ref{eq:def_L}), we mean an operator 
\begin{align}\label{eq:def_Lnondiv}
 L u (t,x)= \int_{\R^d} \left[ u(t,y)-u(t,x)
- \mathbbm{1}_{B_1}(y-x) \big\langle \nabla u(t,x), (y-x) \big\rangle
\right]
 m(t,x,\d y) \,.
\end{align}
for a  general family of measures $m$ such that for all $t$ and $x$,
$m(t,x,dy)$ is still a L\'evy measure in the $y$ variable. A less general and
time-independent version which still is characteristic would
be given by:
\begin{align}\label{eq:def_Lnondiv_simple}
 L v (x)= \int_{\R^d} \left[ v(y)-v(x)
- \mathbbm{1}_{B_1}(y-x) \big\langle \nabla v(x), (y-x)
\big\rangle
\right]
 J(x,y-x) \d y \,,
\end{align}
where $J$ is a function satisfying $\sup_{x\in\R^d} \int \min\{1,|h|^2\}
J(x,h) \,\d h < + \infty$. If additionally $J(x,h)=J(x,-h)$ for all $x$ and
$h\ne 0$, then the operator $L$ from \eqref{eq:def_Lnondiv_simple} reduces
further to 
\begin{align}\label{eq:def_Lnondiv_verysimple}
 L v (x)= \frac12 \int_{\R^d} \left[ v(x+h)-2v(x)+v(x-h) \right]
 J(x,h) \d h \,.
\end{align}
The assumption $J(x,h)=J(x,-h)$ ensures that this operator is of
divergence form with $k(x,y) = J(x,y-x)$ in many cases. Note that the
corresponding energy form needs to be well-defined which forms an additional
restriction. The bilinear form is symmetric if
$J(x,h)=J(x+h,-h)$ for all $x$ and $h\ne 0$ which basically means for the
corresponding process that a jump from $x$ by $h$ to $x+h$ has the same
probability as a jump from $x+h$ by $-h$ back to $x$.  

The second order analog of these nonlocal types of operators are of the form
$a_{ij}(x)u_{x_ix_j}$ as opposed to $\Div(A(x)\grad u(x))$.  The condition
$J(x,h)=J(x,-h)$ for all $x$ and $h\ne 0$ is similar to the symmetry of the
matrices $a_{ij}(x)$. In the second order
case, the main 
program for proving regularity of solutions of equations similar to
(\ref{eq:par_equation}) with estimates that do not depend on the 
$x$-dependence of the equation (the so-called ``bounded measurable coefficients'') goes back to Krylov and Safonov 
\cite{KrSa-1980PropertyParabolicEqMeasurable} where they prove Harnack inequality and H\"older regularity for solutions of the 
second order version of (\ref{eq:par_equation}).  Lihe Wang extends the
Krylov-Safonov program to viscosity solutions (c.f. \cite{CrIsLi-92}) of fully
nonlinear second order versions of (\ref{eq:par_equation}).
% using an adaptation
% of the methods of Caffarelli for the elliptic setting
% \cite{Caff-1989InteriorEstimates}.  

A Harnack inequality and Hölder regularity results are obtained for
linear equations by Bass and Levin in \cite{BaLe02b} not assuming regularity of
the kernel with respect to the state variable. Global Schauder estimates are
obtained by Bass in \cite{Bas09} where almost no regularity with respect to the
jump variable is assumed.

The key argument in the PDE approach to local regularity is
sometimes referred to as a
``point-to-measure'' estimate or a ``growth lemma'' (\cite[Lemma
10.1]{CaSi-09RegularityIntegroDiff}, \cite[Lemma
5.6]{ChDa-2012RegNonlocalParabolicCalcVar}, \cite[Lemma 4.1]{Sil06}), the main
consequence of which is what is known as a sort of weak Harnack inequality which
typically carries the name ``$L^\ep$-lemma'' (\cite[Theorem
10.3]{CaSi-09RegularityIntegroDiff}, \cite[Theorem
5.1]{ChDa-2012RegNonlocalParabolicCalcVar}).  
The main tool which is required to begin this program is the
Aleksandrov-Bakelman-Pucci-Krylov-Tso inequality which only requires $u$ to be a
subsolution of (\ref{eq:par_equation}) in a cylinder $Q$ and less than or equal
to zero on the boundary. It gives (cf. \cite[Theorem
3.14]{Wang-1992ParabolicRegularity-one})
\begin{equation}\label{NonDivEq:ABPKT}
\sup_{Q} u \leq \norm{f}_{L^{d+1}(\{u=\Gamma\})},
\end{equation}
where $\Gamma$ is a special envelope of $u$, typically taken to be a concave envelope in $(t,x)$ in the second order 
setting, but can be much more complicated in the nonlocal setting \cite[Theorem
4.1]{ChDa-2012RegNonlocalParabolicCalcVar}.  
In fact, the absence of a nonlocal analog for the Alexsandrov-Bakelman-Pucci-Krylov-Tso inequality as well as an appropriate 
corresponding envelope, $\Gamma$, for the nonlocal setting has been a significant stumbling block for both the parabolic and 
elliptic non-divergence results.  This is especially true when one hopes to prove results which are robust as $\al\to2-$.  
In most of the existing works, a finite cube approximation to the estimate has
been devised as a substitute for (\ref{NonDivEq:ABPKT}). H\"older regularity
results in these situations are derived by Caffarelli and Silvestre
in \cite{CaSi-09RegularityIntegroDiff}. So far the only result in this direction
which avoids coverings is \cite{GuSc-12ABParma} by Guillen and the second
author in the
stationary setting of (\ref{eq:par_equation}). By construction, these
point-to-measure estimates 
are inherently local results, and as a consequence most of the corresponding regularity results along this line of work are local as well.  
This is in contrast to the divergence setting discussed above.

Although it applies to the elliptic setting, a key contribution to the non-divergence theory is the work of Silvestre \cite{Sil06}, 
where he studies the H\"older regularity for solutions of the stationary version of (\ref{eq:par_equation}).  
The results are not robust, but they do not make any sort of special symmetry assumptions on the kernels, and the 
method of proof of the point-to-measure estimate has been a significant inspiration for 
\cite{Silv-2011DifferentiabilityCriticalHJ} and \cite{ChDa-2012RegNonlocalParabolicCalcVar}.  
An interesting and important phenomenon of competing scaling influences happens when (\ref{eq:par_equation}) 
contains a drift term, $b(x)\cdot\grad u$, and the order is $\al=1$.  In this case the gradient term 
(which is non-regularizing) and the nonlocal term, $Lu$, (which is regularizing) have the 
same order-- so a priori it is not clear which influence will be dominant.  
In \cite{Silv-2011DifferentiabilityCriticalHJ}, Silvestre proves that 
indeed the nonlocal term has enough influence to regularize and solutions will
be classical. Note that we do not review the results for the 2-dimensional
dissipative surface quasi-geostrophic equations. These are equations of the form
\eqref{eq:par_equation} including a drift $b$ which is
assumed to be divergence free -- so not quite within the scope of
(\ref{eq:par_equation}). The critical case is $\alpha=1$ and $v_i \in
L^{\infty}(\operatorname{BMO})$, see \cite{KNV07}, \cite{CaVa10} and
\cite{MaMi13}.

Very recently the fully nonlinear, non-divergence version of (\ref{eq:par_equation}) was 
treated by Chang Lara and D\'{a}vila in
\cite{ChDa-2012RegNonlocalParabolicCalcVar}, where they prove robust and 
local H\"older regularity via a hybrid of the methods from the stationary 
case \cite{CaSi-09RegularityIntegroDiff} and a modification of the techniques 
of \cite{Silv-2011DifferentiabilityCriticalHJ}.

There are also some results which use methods other than the ones above.  Related to a Hamilton-Jacobi 
version of (\ref{eq:par_equation}) (similar to the equation treated by
Silvestre), Droniou and Imbert in \cite{DronImbert-2006FractalFirstOrdARMA} and
Imbert in \cite{Imbert-2005NonlocalRegularizationHJ} 
study regularity properties and deduce classical solutions in many situations.  
These results do not recover the critical result of $\al=1$.  Very general local and 
nonlocal fully nonlinear parabolic problems are treated via representation formulas 
from optimal control in
\cite{CardRain-2011HolderRegLocalNonlocalSuperQuadSIAM} Cardaliaguet and Rainer.
They prove that solutions are uniformly H\"older regular with bounds which only 
depend on the relevant extremal operators-- hence only lower and upper bounds, 
but not on the regularity in the coefficients in $x$-- for their version of 
(\ref{eq:par_equation}).  An example of such an equation would be
\[
u_t + H(x,\grad u, D^2u) -Lu = f,
\] 
where $L$ is a non-divergence form of (\ref{eq:def_L}).

\subsection{Related areas}
Let us also mention the currently very active field of
potential theory for subordinate Brownian motion. A model case of a generator
would be given by $L= \phi(-\Delta)$ where $\phi$ is a complete Bernstein
function. In this specific case one can employ several formulas in order to
prove estimates on the heat kernel or the Green function. But if one considers
operators with variable coefficients which generalize $\phi(-\Delta)$, then one
faces several obstacles, the absence of scaling properties being one of them.
Moreover, there are interesting limit cases where the order of
differentiability, which in general is a function and not just a number, is very
close to zero (less than any positive $\eps$) or very close to $2$. We refer the
reader to \cite{KSV12} for an introduction and to \cite{KiMi12}, \cite{CKS13}
for recent contributions concerning regularity. The existence of solutions to
parabolic nonlocal problems in non-divergence form has been studied by
several authors (and long ago). We only mention \cite{AbKa09}\footnote{Note
that one of the main assumptions in \cite{AbKa09}, condition (1.4), contains a
typo, it should be $k_1(x,y) \geq c |y|^{-n-\alpha}$ for small values of $|y|$.}
and \cite{MiPr13} and refer the reader to the references therein.

\subsection{Discussion of \autoref{thm:harnack} and \autoref{thm:hoelder_main}}
Let us underline, in light of the aforementioned articles, what we view to be
the main contributions of the approach laid out in \cite{FeKa13} and extended
in the present work. 

We prove local regularity results such as a weak Harnack inequality. By a
``local regularity result'' we understand an assertion for functions which are
supposed to satisfy the corresponding equation only in a certain bounded set.
The assertion then says something about these functions in the interior of this
set. Harnack or weak Harnack inequalities are protoptypes of such results. 
In the introduction we already mention that our results are robust for
$\alpha \nearrow 2$, i.e. the constants in our main theorems do not depend on
$\alpha \in (\alpha_0,2)$.

\section{Two examples}\label{sec:examples}

The two main examples we have in mind for our results are the singular measure
supported on the axes

\begin{equation}\label{ExamplesEq:Axes}
	\mu_{axes}(x,dy) = \sum_{i=1}^d
\left[\abs{x_i-y_i}^{-1-\al}dy_i\prod_{j\not=i}\delta_{x_j}(dy_j)\right],
\end{equation}
and the measure in $\real^2$ supported on a cusp near $y=0$
\begin{equation}
\mu_{cusp}(x,dy) = \Indicator_{\{\abs{z_2}>\abs{z_1}^s \vee
\abs{z_1}>\abs{z_2}^s \}
}(x-y)\abs{x-y}^{-d-\al}dy.
\end{equation}
Here we have used the notation for $z\in\real^d$, $z=(z_1,z_2)$ and $0<s<1$.

The canonical measure is
\begin{equation}
	\mu_{\al}(x,dy) = \abs{x-y}^{-d-\al}dy.
\end{equation}

The measure $\mu_{axes}$ corresponds to a pure jump process whose coordinates
are given by \emph{independent} one dimensional L\'evy processes of order $\al$.
 As indicated by the product of $\delta_{x_j}$ measures, this measure only
charges differences of $x$ and $y$ which occur on \emph{one} coordinate axis,
and this corresponds to the fact that each jump occurs along one of the
coordinate
directions.  The measure $\mu_{cusp}$ is simply the restriction of the
$\al$-stable measure $\displaystyle \abs{x-y}^{-d-\al}dy$ to the set where $x-y$
fall in the cusp region.

{\bf Remark} It is worth emphasizing that $\mu_{cusp}$ gives rise to an
operator
which is not of $\al$ order, but rather of order 
	\[
	\beta = (1-1/s) + \al < \al \quad \text{ if } \beta > 0 \,.
	\]
The lower order comes from the fact that the cusp causes spheres of
small radius to be charged with a larger power of $r$ than in the case with the
canonical measure, $\mu_\al$. Not surprisingly this places a restriction on the
possible choices for $s$ once $\al$ is fixed.

In the remainder of this section, we check the assumptions for these two
important example measures.  It will be useful to have some notation.

\begin{equation}
	\E_{\mu,A,B}(u,v) := \int_A\int_B (u(x)-u(y))(v(x)-v(y))\mu(x,dy)dx
\end{equation}

\begin{align}
	\E^{\al} &= \E_{\mu}\ \text{for}\ \mu(x,dy) = \abs{x-y}^{-d-\al}dy\\
	\E^{axes} &= \E_{\mu}\ \text{for}\ \mu(x,dy) = \mu_{axes}\\
	\E^{cusp} &= \E_{\mu}\ \text{for}\ \mu(x,dy) = \mu_{cusp}
\end{align}

\begin{lem}\label{Examples:MuAxesK1}
	$\displaystyle \mu_{axes}$ satisfies (K1).
\end{lem}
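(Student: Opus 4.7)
The plan is to exploit the fact that $\mu_{axes}(x_0,\cdot)$ is supported on the union of the $d$ coordinate lines through $x_0$, so every relevant integral against $\mu_{axes}$ reduces to a one-dimensional integral on a single coordinate. I would decompose
\[
\mu_{axes}(x_0,dy) = \sum_{i=1}^{d}\mu_i(x_0,dy),\qquad \mu_i(x_0,dy)=\abs{(x_0)_i-y_i}^{-1-\alpha}dy_i\prod_{j\neq i}\delta_{(x_0)_j}(dy_j),
\]
and observe that on the support of $\mu_i$ one has $\abs{x_0-y}=\abs{(x_0)_i-y_i}$, since the other coordinates of $y$ agree with those of $x_0$. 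In particular the Euclidean ball $\{\abs{x_0-y}\le\rho\}$ intersects the support of $\mu_i$ exactly in the one-dimensional interval $\{y_i:\abs{(x_0)_i-y_i}\le\rho\}$.

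The next step is then a direct evaluation of the two integrals appearing in (K$_1$). Setting $t=y_i-(x_0)_i$, the near-diagonal piece becomes
\[
\int_{\abs{x_0-y}\le\rho}\abs{x_0-y}^{2}\,\mu_i(x_0,dy)=\int_{-\rho}^{\rho}t^{2}\abs{t}^{-1-\alpha}dt=\frac{2\rho^{2-\alpha}}{2-\alpha},
\]
and similarly
\[
\int_{\abs{x_0-y}>\rho}\mu_i(x_0,dy)=2\int_{\rho}^{\infty}t^{-1-\alpha}dt=\frac{2\rho^{-\alpha}}{\alpha}.
\]
Summing over $i=1,\dots,d$ and multiplying the first quantity by $\rho^{-2}$ yields
\[
\rho^{-2}\int_{\abs{x_0-y}\le\rho}\abs{x_0-y}^{2}\mu_{axes}(x_0,dy)+\int_{\abs{x_0-y}>\rho}\mu_{axes}(x_0,dy)=\left(\frac{2d}{2-\alpha}+\frac{2d}{\alpha}\right)\rho^{-\alpha},
\]
which gives (K$_1$) with $\Lambda=2d\bigl(\tfrac{1}{\alpha_0}+\tfrac{1}{2-\alpha}\bigr)$. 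If one prefers the robust normalization (as in the remark following (K$_2$)), one multiplies $\mu_{axes}$ by the prefactor $(2-\alpha)$, in which case the bound becomes $2d(1+(2-\alpha)/\alpha)\rho^{-\alpha}$, uniformly bounded for $\alpha\in(\alpha_0,2)$.

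There is no real obstacle here: the only subtlety is the geometric observation that the Euclidean distance on the support of each $\mu_i$ collapses to the one-dimensional distance in the $i$-th coordinate, after which the verification reduces to two elementary one-variable integrals. The dependence on $d$ is linear because exactly $d$ identical copies are summed, and the potential blow-up as $\alpha\to 2^-$ in the un-normalized version is the expected one already flagged in the remark on robustness.
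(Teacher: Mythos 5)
Your proof is correct and rests on the same observation as the paper's: on the support of each summand $\mu_i$ the Euclidean distance collapses to the one-dimensional distance $\abs{(x_0)_i-y_i}$, so everything reduces to elementary one-variable integrals. Your version is in fact slightly more direct --- the paper compares the mass of annuli under $\mu_{axes}$ and $\mu_\alpha$ and then invokes a dyadic-ring decomposition to transfer (K$_1$) from the canonical measure, whereas you evaluate the two integrals in (K$_1$) explicitly and correctly note that the resulting constant blows up as $\alpha\to 2^-$ unless the $(2-\alpha)$ normalization is included.
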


\begin{proof}[Proof of (\ref{eq:K-1}) for $\mu_{axes}$]
	This will follow from the simple observation that the two measures,
$\mu_{axes}$ and $\mu_\al$, charge any annulus centered at $x$ with a value
which only differs by a dimensional constant.  Indeed given any $r<R$ and
$\Rscript(x) = B_R(x)\setminus B_r(x)$,

\begin{align*}
		\mu_{axes}(x,\Rscript(x)) &= \int_{\Rscript(x)} \sum_{i=1}^d
\left[\abs{x_i-y_i}^{-1-\al}dy_i\prod_{j\not=i}\delta_{x_j}(dy_j)\right] = 2
\sum_{i=1}^d \int_r^R \abs{y_i}^{-1-\al}dy_i
\\& =\frac{2d}{\al}(r^{-\al}-R^{-\al}),
	\end{align*}
	and
	\begin{align*}
		\mu_\al(x,\Rscript(x)) &= \int_{\Rscript(x)}
\abs{x-y}^{-d-\al}dy = \frac{C(d)}{\al}(r^{-\al}-R^{-\al}),
	\end{align*}
	where $C(d)$ is the surface area of the $d-1$ sphere in this case.
	
	Now we see that writing both integrals of (\ref{eq:K-1}) using e.g.
dyadic rings makes the calculation equivalent to that of the canonical measure,
$\mu_\al$, which gives the desired result.
\end{proof}

\begin{lem}\label{Examples:MuCuspK1}
	$\displaystyle \mu_{cusp}$ satisfies (K1).
\end{lem}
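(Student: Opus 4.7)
My plan is to follow the same pattern as the proof of \autoref{Examples:MuAxesK1}: exploit the translation invariance of $\mu_{cusp}$ in $x-y$ to reduce to $x_0=0$, pass to polar coordinates, and compute the angular size of $\supp(\mu_{cusp})$ on a circle of radius $r$. The whole effect of the cusp is encoded in the scalar quantity $\sigma(r)$, the one-dimensional Lebesgue measure of the admissible angles at radius $r$. The point is that $\sigma(r)$ vanishes like a power of $r$ as $r\to 0$, and this is exactly what produces the shifted order $\beta = \al + 1 - 1/s$ flagged in the remark preceding the lemma.

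In polar coordinates $z = (r\cos\theta, r\sin\theta)$ the defining inequalities of $\supp(\mu_{cusp})$ become
\[
 r^{1-s}\abs{\sin\theta} > \abs{\cos\theta}^s \quad \text{or} \quad r^{1-s}\abs{\cos\theta} > \abs{\sin\theta}^s.
\]
For $r\in (0,1]$, a direct Taylor expansion near each of the four axis directions $\theta_\ast\in\{0,\pi/2,\pi,3\pi/2\}$ shows that only one of the two inequalities can possibly hold there, and its solution set is an arc around $\theta_\ast$ of length comparable to $r^{(1-s)/s}$; the off-axis portion of the unit circle is entirely excluded for small $r$. Hence, setting
\[
 \sigma(r) := \bigl|\{\theta\in[0,2\pi) : (r\cos\theta,r\sin\theta)\in\supp(\mu_{cusp})\}\bigr|,
\]
we obtain $\sigma(r)\le C\, r^{(1-s)/s}$ for $r\in(0,1]$, and trivially $\sigma(r)\le 2\pi$ for all $r$.

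Feeding these bounds into the two terms of \eqref{eq:K-1} with $d=2$ yields, for $\rho\in(0,1]$,
\begin{align*}
 \rho^{-2}\int_{\abs{z}\le\rho}\abs{z}^2\, \mu_{cusp}(0,dz) &\le C\,\rho^{-2}\int_0^\rho r^{1-\al+(1-s)/s}\,dr \le C\,\rho^{-\beta},\\
 \int_{\abs{z}>\rho}\mu_{cusp}(0,dz) &\le C\int_\rho^1 r^{-1-\al+(1-s)/s}\,dr + C\int_1^\infty r^{-1-\al}\,dr \le C\,\rho^{-\beta},
\end{align*}
with $\beta = \al+1-1/s$. Convergence at $0$ in the first line uses $\beta<\al<2$, and the elementary evaluation of the middle integral in the second line uses $\beta>0$ --- precisely the restriction on $s$ noted in the paper. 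Summing the two bounds is exactly \eqref{eq:K-1} for $\mu_{cusp}$ (with the parameter $\al$ there replaced by $\beta$).

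I expect the only delicate step to be the angular-window analysis: around each axis direction one has to identify which of the two inequalities is the ``active'' one and verify that the complementary arcs near $\theta=\pi/4+k\pi/2$ contribute nothing for small $r$, so that the bound $\sigma(r)\le C\, r^{(1-s)/s}$ is tight and does not accidentally miss a component of the support. Once this geometric fact is secured, the rest is the one-dimensional integration performed above, entirely parallel to the treatment of $\mu_{axes}$.
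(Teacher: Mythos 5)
Your argument is correct and is essentially the paper's own proof: both reduce \eqref{eq:K-1} to a radial integral against the angular measure of the cusp's trace on the circle of radius $r$, establish that this measure is $\lesssim r^{(1-s)/s}$ for small $r$ (and merely bounded for large $r$), and split the integration at a fixed cutoff to arrive at the exponent $\beta=\alpha+1-1/s$ under the same restriction $\beta>0$. The only omission is the trivial range $\rho>1$, which the paper's standing remark explicitly permits one to skip (and which follows anyway from $\mu_{cusp}\le\mu_\alpha$).
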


\begin{proof}[Proof of (\ref{eq:K-1}) for $\mu_{cusp}$]
	This follows almost exactly the argument of the case of $\mu_{axes}$,
except here we can actually work with the surface measure on $\partial B_r(x)$,
since $\mu_{cusp}$ is absolutely continuous with respect to Lebesgue measure. It
is easiest to note that we can represent the contribution of
	\begin{equation}
		\int_{\partial B_r(x)}\Indicator_{\{\abs{z_d}>\abs{Z}^s\}
}(x-y)\abs{x-y}^{-d-\al}\d S_r(y)
	\end{equation}
	as a fraction of the contribution 
	\begin{equation}
		\int_{\partial B_r(x)}\abs{x-y}^{-d-\al} \d S_r(y).
	\end{equation}
	If we let $m_d(\theta)$ be the proportion of the sphere which is within
an angle of $\theta$ with the positive $z_2$-axis we see that the measure
$\mu_{cusp}$ only allows for 
a contribution from a small portion of the sphere depending on $r$.  We have

	\begin{equation*}
		\int_{\partial B_r(x)}\Indicator_{\{\abs{z_d}>\abs{Z}^s\}
}(x-y)\abs{x-y}^{-d-\al}\d S_r(y) = m_d(\theta(r))	\int_{\partial
B_r(x)}\abs{x-y}^{-d-\al} \d S_r(y),
	\end{equation*}
for 
	\begin{equation*}
		\theta(r) =
\sin^{-1}\left(\frac{z_1(r)}{\sqrt{z_1(r)^{2s}+z_1(r)^2}}\right).
	\end{equation*} 
Here we have used the notation that given any fixed $r$,
$z_1(r)$ is the unique positive solution of
	\begin{equation*}
		r^2=z_1(r)^2 + z_1(r)^{2s}.
	\end{equation*}
	The most important feature of this relationship is that for $r$ small
(which can be quantified in terms of $s$), 
	\begin{equation*}
		\abs{z_1(r)} \approx r^{1/s}
	\end{equation*}
	
	The key observations which both identifies the correct fractional scale
of the energy associated to $\mu_{cusp}$ and the fact which allows us to verify
(\ref{eq:K-1}) are that 
	for $r$ small (uniformly, depending only on $d$ and $s$),
	\begin{equation*}
		m_d(\theta(r)) \approx C r^{(1-s)/s},
	\end{equation*}
	and for $r$ large,
	\begin{equation*}
		m_d(\theta(r))\to 1.
	\end{equation*}
	
	The small $r$ behavior of $\mu_{cusp}$ is the main determining factor
for the true scale of 
the operator in (\ref{eq:def_L}).  Indeed we see, for example, that for $r<<1$
	\begin{equation*}
		\int_{B_r}\abs{z}^2\mu(0,dz) \approx C\int_0^r  \tau^{(1-s)/s}
\int_{\partial B_\tau} \abs{z}^{2-d-\al}\d S_\tau(z)
		= \int_0^r \int_{\partial B_\tau} \abs{z}^{2-d-\al-(s-1)/s}\d
S_\tau(z)
	\end{equation*}
	This tells us that $\mu_{cusp}$ does not have scale $\al$, but rather
satisfies (\ref{eq:K-1}) and (\ref{eq:K-2}) with the scale
	\begin{equation*}
		\beta = (1-\frac{1}{s}) + \al < \al\,,
	\end{equation*}
as long as $\beta$ is positive. These heuristic
arguments can be made rigorous in checking
(\ref{eq:K-1}) by choosing a cutoff, $r_0$, which depends only on a lower bound
of $s$, say $s_0\leq s$, such that 
	\[
	\frac{1}{2}\theta(r)\leq \sin(\theta(r))\leq \theta(r)
	\]
	and
	\[
	\sin(\theta(r))\leq \frac{1}{2} z_1(r)^{1-s}\leq
\frac{1}{2}r^{(1-s)/s},
	\]
	for all $r\leq r_0$.  Then the integrals in (\ref{eq:K-1}) can be split
at $r_0$ and estimated using either 
	\[
	m(\theta(r))\leq C(d,s_0)r^{(1-s)/s}
	\]
	or
	\[
	1\geq m(\theta(r))\geq \tilde C(d,s_0).
	\]
\end{proof}

\begin{lem}\label{ExamplesLem:MuAxesK2}
	$\displaystyle \mu_{axes}$ satisfies (K2).
\end{lem}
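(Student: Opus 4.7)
The plan is to use a chaining (telescoping) argument for one direction and a tube argument for the other, mirroring the global Fourier-level identity $|\xi|^{\al}\asymp\sum_{i=1}^d|\xi_i|^{\al}$ (with constants depending only on $d$) that yields the comparison on all of $\R^d\times\R^d$.

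For the lower bound $\E^{axes}(v,v)\leq\Lambda(2-\al)\E^\al(v,v)$ (both forms restricted to $B\times B$): for each axis-aligned pair $(x,x+te_i)\in B\times B$, introduce the tube $T_{x,i,t}=\{y\in B:|y_i-x_i-t|<\tfrac{1}{2}|t|,\,|y_j-x_j|<c|t|\text{ for all }j\neq i\}$, on which $|T_{x,i,t}|\asymp|t|^d$ and $|x-y|\asymp|t|$. A midpoint Cauchy-Schwarz
\[
[v(x)-v(x+te_i)]^2\leq\frac{2}{|T_{x,i,t}|}\int_{T_{x,i,t}}\!\!\bigl([v(x)-v(y)]^2+[v(y)-v(x+te_i)]^2\bigr)\,dy,
\]
combined with Fubini in $t$ (for each fixed $y$ the set of admissible $t$ has measure $\asymp|t|$) yields $\E^{axes}\lesssim\E^\al$ with constant depending only on $d$; the desired inequality then follows with $\Lambda\asymp(2-\al)^{-1}$.

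For the upper bound $(2-\al)\E^\al(v,v)\leq\Lambda\E^{axes}(v,v)$: assume without loss of generality $x_0=0$. For each $(x,y)\in B\times B$, choose a permutation $\sigma=\sigma(x,y)$ that orders the indices so that $y_{\sigma(l)}^2\leq x_{\sigma(l)}^2$ for small $l$ and $y_{\sigma(l)}^2>x_{\sigma(l)}^2$ for large $l$. A direct computation shows that the intermediate vertices $z_k^\sigma=(y_{\sigma(1)},\ldots,y_{\sigma(k)},x_{\sigma(k+1)},\ldots,x_{\sigma(d)})$ of the telescoping path $z_0^\sigma=x$, $z_d^\sigma=y$ satisfy $|z_k^\sigma|^2\leq\max(|x|^2,|y|^2)\leq\rho^2$, so all $z_k^\sigma$ remain in $B$. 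The chaining $[v(x)-v(y)]^2\leq d\sum_k[v(z_{k-1}^\sigma)-v(z_k^\sigma)]^2$ followed by a change of variables $(x,y)\mapsto(z_{k-1}^\sigma,z_k^\sigma,\text{transverse})$ and Fubini over the $d-1$ transverse coordinates (using $\int_{\R^{d-1}}(|u|^2+s^2)^{-(d+\al)/2}du=c_{d,\al}|s|^{-1-\al}$) converts the canonical weight $|x-y|^{-d-\al}$ into the axes weight $|z_{k-1}^\sigma-z_k^\sigma|^{-1-\al}$.

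The main obstacle is the combination of the ball constraint with the $(x,y)$-dependence of $\sigma$: summing over the partition of $B\times B$ into the $d!$ pieces $\{(x,y):\sigma(x,y)=\sigma_0\}$ requires careful bookkeeping when performing the change of variables and the transverse Fubini. I would either argue piecewise on each such piece, controlling the transverse integral by its unrestricted counterpart, or, alternatively, sum unconditionally over all permutations after extending $v$ from $B$ to $B_{\sqrt{2}\rho}(x_0)$ by reflection so as to absorb the contribution of intermediate vertices that might otherwise exit $B$. In either approach the resulting constant $\Lambda=\Lambda(d,\al)$ may depend on $\al$ but remains finite for each $\al\in(0,2)$, consistent with $\mu_{axes}$ lacking the $(2-\al)$ normalization present in the canonical measure.
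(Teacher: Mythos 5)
Your argument is correct and arrives at the same comparability $\E^{axes}_{B,B}\asymp_{C(d)}\E^{\al}_{B,B}$ as the paper, but by a genuinely different route: the paper discretizes both energies as Riemann sums on a cartesian grid (passing through polar coordinates to balance the weights $r^{-1-\al}\Delta y$ against $r^{-d-\al}(\Delta y)^{d}$) and counts how many times each grid pair is reused in the chaining, whereas you work at the continuous level and trade the $d-1$ transverse degrees of freedom for the weight mismatch via Fubini, using $\int_{\R^{d-1}}(|u|^2+s^2)^{-(d+\al)/2}\,du=c_{d,\al}|s|^{-1-\al}$ with $c_{d,\al}$ uniformly bounded in $\al$. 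Two points are worth emphasizing. First, your norm-monotone permutation is an actual improvement in rigor over the paper's sketch: the intermediate point $z=(x_1,y_2)$ used there can leave $B$ (e.g.\ $x=(0.9,0.3)\rho$, $y=(0.3,0.9)\rho$ gives $|z|>\rho$), while your ordering guarantees $|z_k^{\sigma}|\le\max(|x|,|y|)$, which is exactly what the restriction to $B\times B$ requires. Second, of the two options you offer for the $\sigma$-dependent bookkeeping, the first one (work on each of the $d!$ pieces and dominate the restricted transverse integral by its unrestricted counterpart over $\R^{d-1}$; positivity of the integrand is all that is needed) closes the proof with a purely dimensional constant, so commit to it; the reflection-extension alternative is unnecessary and would itself require an extension estimate for $\E^{axes}$, so drop it. Your tube argument for the other direction is likewise a continuous analogue of the paper's Step 2 and is fine once you justify $|T_{x,i,t}|\gtrsim_d |t|^d$ (convexity of $B$ and $x+te_i\in B$ give this). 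The accounting of constants at the end is also right: both directions hold with dimensional constants between $\E^{axes}$ and $\E^{\al}$, so the $\Lambda$ in (K2) is necessarily of order $(2-\al)^{-1}$ because $\mu_{axes}$ carries no $(2-\al)$ normalization, the same $\al$-dependence the paper's introductory remark attributes to the unnormalized canonical kernel.
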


\begin{proof}[Proof of (\ref{eq:K-2}) for $\displaystyle \mu_{axes}$]
	We include the details for the case $d=2$, and then comment on the few
modifications for the general dimensional case at the end.  So temporarily
assume $d=2$.
	
	First we note that it suffices to check the inequality for $v\in
C^{0,1}(B)$.  This assumption on $v$ allows us to both ignore a small
neighborhood of the diagonal of $B\times B$ and to write all of the energies in
terms of Riemann sums.  This will make the accounting of terms easier.  We use
the following notation
	\begin{align}
		N &\ \text{fixed, depending only on}\ v\\
		\Delta h &= \frac{2\rho}{N}\ \text{fixed, depending only on}\
v\\
		\Delta y &= \Delta x = \Delta h\\
		\Delta r &= \Delta h\\
		\Delta \theta &= \frac{\Delta h}{2\pi\rho}\\
		\{(x_j,y_k)\} &= \text{centers of the cells of}\ (\Delta h
\integer)^2\times (\Delta h \integer)^2 \intersect B\times B\\
		x_j\sim y_k &\ \text{if either}\ (x_j)_1=(y_k)_1\ \text{or}\ 
(x_j)_2=(y_k)_2\\
		r_{jk} &= \abs{x_j-y_k}\\
		v_{jk} &= (v(x_j)-v(y_k))\\
		\Rscript_l(x_j) &= \left\{y_k \in (B_{l\Delta r}(x_j)\setminus
B_{(l-1)\Delta r}(x_j)\intersect B) \right\}\\
	\hat y_{lm} &= y_k\ \text{if the}\ l\Delta r, m\Delta \theta\
\text{(polar) cell lies a majority within}\ Q_{\Delta y}(y_k)  
	\end{align}
	Here we include all of $\Delta x$, $\Delta y$, $\Delta r$, $\Delta h$,
$\Delta \theta$ even though 
we set equality $\Delta y = \Delta x = \Delta h = \Delta r$ and $\Delta \theta =
\Delta h / (2\pi\rho)$.  
This is just to try to indicate which integrals we are discretizing in various
lines of the calculation.  
The main discretization parameter is $\Delta h$, and this is chosen based only
on the $C^{0,1}$ norm of $v$ 
to ensure that all approximations of the various integrals are within a given
tolerance of the continuous versions.

	It will be convenient to approximate the original energy in cartesian
coordinates,
	
	\begin{align*}
		\E^{axes}_{approx} &= \sum_{x_j}\sum_{y_k\sim x_j}
\frac{v_{jk}^2}{r_{jk}^{1+\al}}\Delta y (\Delta x)^2,
	\end{align*}
	the canonical energy in both polar coordinates in the inside integration
	\begin{align*}
		\E^\al_{approx} = \sum_{x_j}\sum_{l}\sum_{y_k\in\Rscript_l(x_j)}
\frac{v_{jk}^2}{r_{jk}^{2+\al}}r_{jk}\Delta r \Delta \theta (\Delta x)^2,
	\end{align*}
	and finally the canonical energy in cartesian coordinates
	\begin{align*}
		\E^\al_{approx} = \sum_{x_j}\sum_{y_k}
\frac{v_{jk}^2}{r_{jk}^{2+\al}}(\Delta y)^2 (\Delta x)^2.
	\end{align*}
	We have chosen to work with a uniform grid in cartesian coordinates,
with cells centered at $(x_j,y_k)\in B\times B$. Since $v$ is fixed and in
$C^{0,1}$, the grid size can be chosen small enough so that this collection of
cartesian grid points
$\{(x_j,y_k)\}$ is sufficient to resolve both the cartesian and polar
approximate decompositions.  This is relevant because the polar approximation
grid does not match the cartesian one, 
but if $\Delta h$ is small enough the cartesian points can be used for the polar
ones without adding significantly to the overall error.

	\noindent
	\textbf{Step 1:} $\displaystyle \E^{axes}_{B,B} \geq C(d)
\E^\al_{B,B}$.

	The strategy is that we can link $\E^{axes}$ with $\E^\al$ via an
intermediate energy which has the same weights in $r_{jk}$ and $\Delta y$ as
$\E^{axes}$ but sums over all $x_j$ and $y_k$ instead of only $y_k\sim x_j$:
	
	\begin{align}\label{ExamplesEq:IntermediateEng}
		\F(v) =
\sum_{x_j}\sum_{y_k}\frac{(v(x_k)-v(y_k))^2}{r_{jk}^{1+\al}} \Delta y (\Delta
x)^2
	\end{align} 
	
The inequality
	\begin{align}
		2\left((v(x)-v(y))^2 + (v(z)-v(y))^2\right) \geq
\left(v(x)-v(y)\right)^2
	\end{align}
allows us to recover the contribution between any two points, $x$ and $y$ from
that of points which satisfy $x\sim y$.  Specifically, given $x$ and $y$, the
choice of $z=(x_1,y_2)$ allows for such a connection.  In particular, for $z$
fixed, all of the entries on the axes passing through $z$ will need to be used
$CN$ ($C$ is dimensional) times in connecting all of the $x$ and $y$ lying on
the axes passing though $z$.  This gives the inequality

\begin{align}
	4N \E^{axes}_{approx} &= 4N\sum_{x_k}\sum_{y_k\sim x_j}
\frac{v_{jk}^2}{r_{jk}^{1+\al}}\Delta h (\Delta h)^2\\
	&\geq \sum_{x_j}\sum_{y_k} \frac{v_{jk}^2}{r_{jk}^{1+\al}}\Delta h
(\Delta h)^2 = \F(v).
\end{align}

\noindent
Next we work from $\F(v)$ to $\E^\al_{approx}$ by using polar coordinates for
the inside integral.  The reason for initially discretizing in polar is to more
easily see the balance between the unmatched powers of $r_{jk}$ and $\Delta y$
between $\E^{axes}$ and $\E^\al$.

	\begin{align}
		\F(v) &=  \frac{1}{\Delta h}\sum_{x_j}\sum_{y_k}
\frac{v_{jk}^2}{r_{jk}^{1+\al}r_{jk}}r_{jk}\Delta h \Delta h (\Delta h)^2\\
		&= (C(d))\frac{1}{\Delta
h}\sum_{x_j}\sum_{\Rscript_l(x_j)}\sum_{y_k\in\Rscript_l(x_j)}
\frac{v_{jk}^2}{r_{jk}^{1+\al}r_{jk}}r_{jk}\Delta r (2\pi \rho\Delta \theta)
(\Delta x)^2\\
		&= (C(d))\frac{2\pi}{\Delta
h}\rho\sum_{x_j}\sum_{\Rscript_l(x_j)}\sum_{y_k\in\Rscript_l(x_j)}
\frac{v_{jk}^2}{r_{jk}^{2+\al}}r_{jk}\Delta r \Delta \theta (\Delta
x)^2\\
		&= C(d)\frac{2\pi N}{\rho} \rho\sum_{x_j}\sum_{y_k}
\frac{v_{jk}^2}{r_{jk}^{2+\al}}(\Delta y)^2 (\Delta
x)^2\\
		&= C(d) N \E^\al_{approx}
	\end{align}

Again, due to the $C^{0,1}$ nature of $v$, $N$ and $\Delta h$, can be chose
appropriately so that we can deduce
\begin{equation}
	\E^{axes}_{B,B} \geq C(d) \E^\al_{B,B},
\end{equation}
which completes step 1.

Now we move on to the easier step

\noindent
\textbf{Step 2:}  $\displaystyle \E^{axes}_{B,B}(v,v) \leq C(d)
\E^\al_{B,B}(v,v)$.

We again appeal to the intermediate energy (\ref{ExamplesEq:IntermediateEng}). 
We reuse the above calculation, where instead we make use of the inequality
\begin{equation}
	(v(x)-v(z))^2 + (v(z)-v(y))^2 \leq (v(x)-v(y))^2.
\end{equation}

\begin{align}
	4N\E^{axes}_{approx} &= \sum_{x_j}\sum_{y_k\sim x_j}
\frac{v_{jk}^2}{r_{jk}^{1+\al}}\Delta y (\Delta x)^2\\
	&\leq \sum_{x_j}\sum_{y_k}\frac{(v(x_k)-v(y_k))^2}{r_{jk}^{1+\al}}
\Delta y (\Delta x)^2\\
	&= C(d)N \E^\al_{approx}.
\end{align}
This completes the argument for $d=2$.

For an arbitrary dimension, the main modification is in the number of times a
given line will be used as auxiliary points to connect arbitrary $x_j$ and $y_k$
on the grid.  In this case, each $y_k$ in $\E^{axes}_{approx}$ will be used
$CN^{d-1}$ times to make these connections.  Matching this power of $N^{d-1}$ is
the term $r^{d-1}(\Delta h)^{d-1}$ which must be introduced to $\F(v)$ to be
compatible with the approximation of $\E^\al$ in spherical coordinates.  Thus we
end up in the same situation (neglecting dimensional constants) with a
multiplication by $N^{d-1}$ on both sides of the inequality.
\end{proof}

It turns out to be at least a little bit non-trivial to check (\ref{eq:K-2}) for
the measure, $\mu_{cusp}$.  Instead of giving a Riemann Sum argument as for
$\mu_{axes}$, we use the fact that (\ref{eq:K-2}) has already been verified in
\cite{DyKa13}.  We note that none of the constants in the proof of
\cite{DyKa13} depend $\rho$ in the underlying set, $B$.
\begin{lem}[\cite{DyKa13}]\label{Examples:MuCuspK2}
	$\displaystyle \mu_{cusp}$ satisfies (K2).
\end{lem}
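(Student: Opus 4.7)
The strategy is to split (K2) into its two inequalities. The upper half, bounding $\mathcal{E}^{\mathrm{cusp}}_{B,B}(v,v)$ from above by a constant times the full fractional energy, is immediate: by construction, $\mu_{\mathrm{cusp}}(x,\mathrm{d}y)$ is the restriction of the $\alpha$-stable kernel to the cusp region, so pointwise domination of the measures yields pointwise domination of the associated Dirichlet forms, and the constants align with the scaling computation already carried out in the proof of \autoref{Examples:MuCuspK1}.

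The lower half --- bounding the full fractional energy from above by $\Lambda\, \mathcal{E}^{\mathrm{cusp}}_{B,B}(v,v)$ --- is the substantive content. One must recover jump contributions in directions forbidden by the cusp from those captured by it. The natural approach, and the one executed in \cite{DyKa13}, is a chain argument. Given $x,y \in B$ whose difference $x-y$ lies outside the cusp region, one produces a short interpolating sequence $x = z_0, z_1, \ldots, z_N = y$, all in $B$, with each increment $z_{i+1} - z_i$ lying in the cusp region and $|z_{i+1} - z_i| \asymp |x-y|$. The telescoping identity $v(x)-v(y) = \sum_{i=0}^{N-1}(v(z_{i+1})-v(z_i))$ combined with Cauchy--Schwarz gives the pointwise bound
\[
[v(x)-v(y)]^2 \;\leq\; N \sum_{i=0}^{N-1} [v(z_{i+1})-v(z_i)]^2.
\]
Integrating this over $B\times B$ and using Fubini to re-parametrize through the auxiliary points produces the desired estimate of the full fractional energy by a constant multiple of $\mathcal{E}^{\mathrm{cusp}}_{B,B}(v,v)$.

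The main obstacle is the chain construction. The cusp region is highly non-convex and pinches toward the origin, yet the length $N$ must be bounded purely in terms of $d$ and a lower bound $s_0 \leq s$. The construction exploits that the cusp contains two ``fat'' cones --- the regions $\{|z_2|>|z_1|^s\}$ and $\{|z_1|>|z_2|^s\}$ away from the origin --- which jointly cover a fixed positive proportion of directions; any pair can then be joined by at most a uniformly bounded number of hops through auxiliary points chosen in the intersection of these cones at scale comparable to $|x-y|$. The delicate part is handling pairs whose difference lies close to the thin ``forbidden'' region (the complement of the cusp near the bisecting directions), where the hop lengths and directions must be chosen carefully so that the intermediate points remain inside $B$ and the increments stay in the cusp at the correct scale.

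Once the chains are built, the final step is to verify that all constants are independent of $\rho$. This follows from the scaling invariance of the chain construction (everything respects the scale $|x-y|$) and is explicitly flagged in \cite{DyKa13}; it is precisely this $\rho$-independence that makes the estimate usable as (K2) for every $B=B_\rho(x_0)$.
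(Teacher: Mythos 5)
First, a point of comparison: the paper itself contains no proof of this lemma. It is stated as a citation to \cite{DyKa13}, and the only content the paper adds is the observation that the constants obtained there do not depend on the radius $\rho$ of $B$. So your sketch is really being measured against the argument of \cite{DyKa13}. Your plan for the substantive inequality --- recovering jumps in forbidden directions by telescoping through intermediate points whose increments lie in the cusp, then applying Cauchy--Schwarz and Fubini --- is the right family of argument, and your closing remark on $\rho$-independence matches the one point the paper makes explicitly.

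There is, however, a concrete problem with the half you call ``immediate'', and it propagates into the bookkeeping of the hard half. The two inequalities of \eqref{eq:K-2} must hold with the \emph{same} exponent, and, as the remark preceding the lemma states, for $\mu_{cusp}$ that exponent is $\beta=(1-1/s)+\alpha<\alpha$, not $\alpha$. Pointwise domination of measures only gives $\mathcal{E}^{cusp}_{B,B}\leq\mathcal{E}^{\alpha}_{B,B}$; it does \emph{not} give $\mathcal{E}^{cusp}_{B,B}\lesssim(2-\beta)\,\mathcal{E}^{\beta}_{B,B}$, because on its support and near the diagonal the cusp kernel $\mathbbm{1}_{cusp}(x-y)\abs{x-y}^{-d-\alpha}$ is much \emph{larger} than $\abs{x-y}^{-d-\beta}$; that upper bound is an integrated statement which needs the thinness of the cusp ($m_d(\theta(r))\approx r^{(1-s)/s}$, as in the proof of \autoref{Examples:MuCuspK1}) to offset the stronger singularity. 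Conversely, if you keep the exponent $\alpha$, the reverse inequality $\mathcal{E}^{\alpha}_{B,B}\lesssim\mathcal{E}^{cusp}_{B,B}$ fails with a uniform constant: a function oscillating at scale $r$ has cusp energy of order $r^{-\beta}$ per unit volume against $r^{-\alpha}$ for the full fractional energy. So, as written, your two halves are established against incompatible exponents and neither pairing yields \eqref{eq:K-2}. The same omission sits inside your Fubini step: the set of admissible intermediate points $z$ with $x-z$ in the cusp and $\abs{x-z}\asymp\abs{x-y}$ has Lebesgue measure $\asymp\abs{x-y}^{d+(1-s)/s}$, not $\abs{x-y}^{d}$, and it is exactly this deficit that converts the comparison from order $\alpha$ to order $\beta$ when you re-parametrize through the hops. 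Without tracking it, a bound on the chain length $N$ in terms of $d$ and $s_0$ alone cannot close the estimate uniformly over scales, which is the whole point of \eqref{eq:K-2}.
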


%%%%%%%%%%%%%%%%%%%%%%%%%%%%%%%%%%%%%%%%%%%%%%%%%%%%
%%%%%%%%%%%%%%%%%%  SECTION %%%%%%%%%%%%%%%%%%%%%%%%
%%%%%%%%%%%%%%%%%%%%%%%%%%%%%%%%%%%%%%%%%%%%%%%%%%%%
%%%%%%%%%%%%%%%%%%%%%%%%%%%%%%%%%%%%%%%%%%%%%%%%%%%%
%%%%%%%%%%%%%%%%%%  TWO GOOD EXAMPLES %%%%%%%%%%%%%%%%%%%%%%%%
%%%%%%%%%%%%%%%%%%%%%%%%%%%%%%%%%%%%%%%%%%%%%%%%%%%%

\section{Proofs}\label{sec:proofs}

In this section we explain how to prove \autoref{thm:harnack} and
\autoref{thm:hoelder_main}. We closely follow the strategy of \cite{FeKa13} and
restrict our explanations to those parts of the proofs which are different
for our set-up.

Let us first explain what it means that a
function $\R \times \R^d \to \R$ satisfies
equation \eqref{eq:par_equation} in a bounded region in the weak sense.  For
$\alpha \in (0,2)$ the Sobolev space of fractional order $\alpha/2$ is defined
by
\begin{align}
H^{\alpha/2}(\Omega) &= \left\{ v \in L^2(\Omega)\colon \norm{v}_{H^{\alpha/2}(\Omega)} < +\infty \right\}
\intertext{where the norm is defined by}
\norm{v}_{H^{\alpha/2}(\Omega)}^2 &= \norm{v}^2_{L^2(\Omega)} + \akon
\iint\limits_{\Omega\, \Omega} \frac{\bet{v(x)-v(y)}^2}{\bet{x-y}^{\alpha+d}} \d x\d y.
\end{align}
We denote by $H^{\alpha/2}_0(\Omega)$ the completion of
$C_c^\infty(\Omega)$
under $\norm{\cdot}_{H^{\alpha/2}(\R^d)}$ and by $H^{-\alpha/2}$ the dual of $H_0^{\alpha/2}$. By $\inf v$ and $\sup v$ we denote 
the essential infimum and the essential supremum, respectively, of a given funktion $v$.
\subsection{Weak solutions}
We define a nonlocal bilinear form associated to $L$ by 
\begin{equation}
 \cE_t(u,v) = \iint\limits_{\R^d\, \R^d} \left[ u(t,y) - u(t,x)\right] \left[
v(t,y)-v(t,x) \right] a(t,x,y)\mu(x,\d y) \d x.
\end{equation}
Let us recall the notion of local solutions from \cite{FeKa13}.
\begin{defi} \label{defi:solution}
Assume $Q=I\times \Omega \subset \R^{d+1}$ and $f \in L^\infty(Q)$. We say that
$u \in L^\infty(I;L^\infty(\R^d))$ is a \emph{supersolution of \eqref{eq:par_equation} in $Q=I \times \Omega$}, if
\begin{enumerate}[(i)]
 \item $u \in C_{loc}(I;L^2_{loc}(\Omega)) \cap L^2_{loc}(I;H^{\alpha/2}_{loc}(\Omega))$,
 \item for every subdomain $\Omega' \Subset \Omega$, for every subinterval $[t_1,t_2]\subset I$ and for every 
nonnegative test function $\phi \in H^1_{loc}(I;L^2(\Omega')) \cap L^2_{loc}(I;H_0^{\alpha/2}(\Omega'))$,
\begin{align} \label{eq:def-solution}
\int_{\Omega'} \phi(t_2,x) u(t_2,x) \d x -&\int_{\Omega'} \phi(t_1,x) u(t_1,x)
\d x -
\int_{t_1}^{t_2} \int_{\Omega'} u(t,x) \partial_t \phi(t,x) \d x \d t +
\int_{t_1}^{t_2} \cE_t(u,\phi) \d t \nonumber \\ 
&\geq \int_{t_1}^{t_2} \int_{\Omega'} f(t,x) \phi(t,x) \d x \d t .
\end{align}
\end{enumerate}
\end{defi}

Instead of writing that $u$ is a supersolution in $I \times \Omega$ we write $\partial_t u - Lu \geq f$ in $I \times \Omega$. 
Subsolutions and solutions are defined accordingly. Working rigorously with local weak solutions for parabolic 
equations requires some care. Instead of the definition above we will use the inequality
\begin{equation}\label{eq:steklov-solution}
 \int_{\Omega'} \partial_t u(t,x) \phi(t,x) \d x + \cE_t(u(t,\cdot),\phi(t,\cdot)) \geq \int_{\Omega'} f(t,x) \phi(t,x)\d x \qquad \text{for a.e. } t \in I, \tag{\ref*{eq:def-solution}'}
\end{equation}
for test functions of the form $\phi(t,x)=\psi(x) u^{-q}(t,x)$, $q >0$, where $u$ is a positive supersolution 
in $I \times \Omega$ and $\psi$ a suitable cut-off function. In particular, we assume that $u$ is a.e. differentiable in time. The use (\ref{eq:steklov-solution}) can be justified using Steklov averages. 

A second crucial ingredient in our  proof is the scaling property of
\eqref{eq:par_equation} which we are now going to explain.

\subsection{Scaling}
%%%%%%%
We will often make use of scaling. Let us explain how the operator under consideration behaves with respect to rescaled functions. Define $B_r(x_0) =\left\{ x \in \R^d \colon \bet{x-x_0} < r \right\}$ and
\begin{align*}
I_r(t_0)&=(t_0-r^\alpha,t_0+r^\alpha),&Q_r(x_0,t_0)&= I_r(t_0) \times B_r(x_0),&\\
I_{\oplus}(r) &= (0,r^\alpha),& Q_\oplus(r) &= I_{\oplus}(r) \times B_r(0),\\
I_{\ominus}(r) &= (-r^\alpha,0),& Q_\ominus(r) &= I_{\ominus}(r) \times B_r(0).
\end{align*}
%%%%%%%%%%%%%%%%%%

\begin{lem}[Scaling property] \label{lem:scaling} 
Let $\alpha \in (0,2)$, $\xi \in \R^d$, $\tau \in \R$ and $r>0$. Define a diffeomorphism $J$ by $J(x)= rx + \xi$. 
Let $u$ be a supersolution of the equation \eqref{eq:par_equation} in some cylindrical domain $I\times \Omega = Q \Supset Q_r(\xi,\tau)$. Define new functions $\widetilde u$ by $\widetilde u(t,x)= u(r^\alpha t+\tau,J(x))$ and $\widetilde f$ by $\widetilde f(t,x)=r^\alpha f(r^\alpha t+\tau,r x+\xi)$. \\ Then $\widetilde u$ satisfies $\partial_t u - \widetilde{L}u \geq \widetilde{f}$ in $\tilde I\times J^{-1}(\Omega) \Supset (-1,1) \times B_1(0)$, where $\widetilde{L}$ is defined as in \eqref{eq:def_L} with $\widetilde{a}(t,x,\d y) \widetilde{\mu}(x,\d y)$ defined by 
\begin{align}
\widetilde{a}(t,x,y) = a(r^\alpha t+\tau,rx+\xi,ry+\xi), \quad \widetilde{\mu} (x,\d y) = r^\alpha (\mu \circ J)(Jx, \d y) \,. 
\end{align}
where $(\mu \circ J)(z, A) = \mu(z, J(A))$. The function $\widetilde{a}$ is positive, bounded and symmetric as is $a$. 
The family of measures $\widetilde{\mu} (\cdot,\d y)$ satisfies assumptions \eqref{eq:K-1} and \eqref{eq:K-2} with the same constants as in the case of $\mu (\cdot,\d y)$.
\end{lem}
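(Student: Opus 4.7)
The plan is to verify the conclusion by a single affine change of variables $J(x)=rx+\xi$ in space together with $s=r^\alpha t + \tau$ in time, and then to check that after these substitutions the rescaled operator $\widetilde L$ still satisfies the same structural hypotheses \eqref{eq:K-1} and \eqref{eq:K-2} with the same constant $\Lambda$.

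First I would handle the equation itself. The chain rule gives $\partial_t \widetilde u (t,x) = r^\alpha (\partial_s u)(s, Jx)$. For the nonlocal part, I would change variables $w = Jy$ in the defining integral for $Lu$ at the point $(s, Jx)$; comparing with the definition $\widetilde \mu(x,\cdot) = r^\alpha (\mu\circ J)(Jx, \cdot)$ one sees that $\widetilde L \widetilde u(t,x) = r^\alpha (Lu)(s, Jx)$. Multiplying the original supersolution inequality at the shifted point by $r^\alpha$ then produces $\partial_t \widetilde u - \widetilde L \widetilde u \geq \widetilde f$. For the weak formulation of Definition \ref{defi:solution}, I would pull test functions back under $J$; the Jacobian factor $r^{-d}$ appears symmetrically on both sides of \eqref{eq:def-solution}, and the extra $r^\alpha$ built into $\widetilde \mu$ on the left is exactly matched by the $r^\alpha$ coming from $s = r^\alpha t + \tau$ on the right, so the weak inequality transforms consistently. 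Symmetry of $\widetilde a$ is immediate from that of $a$.

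Next I would verify \eqref{eq:K-1} for $\widetilde \mu$. Substituting $w = Jy$ gives $|x_0 - y| = r^{-1}|Jx_0 - w|$ and $\widetilde \mu(x_0, dy) = r^\alpha \mu(Jx_0, dw)$, so the two terms appearing in \eqref{eq:K-1} for $\widetilde \mu$ at radius $\rho$ equal precisely $r^\alpha$ times the two corresponding terms for $\mu$ at radius $r\rho$; combined with the bound $\Lambda (r\rho)^{-\alpha}$ one obtains $\Lambda \rho^{-\alpha}$, as needed. For \eqref{eq:K-2} I would apply the same change of variables in both the inner and outer integrals, with test function $\widetilde v(z) := v(J^{-1}z)$ on $B_{r\rho}(Jx_0)$; both the $\widetilde \mu$-energy and the fractional Sobolev seminorm over $B_\rho(x_0) \times B_\rho(x_0)$ acquire a common factor $r^{\alpha - d}$, which cancels. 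Hence \eqref{eq:K-2} for $\mu$ applied on $B_{r\rho}(Jx_0)$ with the function $\widetilde v$ immediately transfers to \eqref{eq:K-2} for $\widetilde \mu$ on $B_\rho(x_0)$ with the same $\Lambda$.

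I do not expect any step to be conceptually hard; the only genuine point of care is the bookkeeping of the push-forward notation $\widetilde \mu(x,\cdot) = r^\alpha (\mu \circ J)(Jx,\cdot)$ and making sure that the prefactor $r^\alpha$ is precisely the one needed to balance the time rescaling $s = r^\alpha t + \tau$. Once this is correctly set up, the verification of \eqref{eq:K-1} and \eqref{eq:K-2} is essentially forced by the way $|x-y|$ and the $d$-dimensional Lebesgue measure behave under $J$, so the constants in those conditions remain unchanged.
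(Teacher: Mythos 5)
Your proposal is correct and follows essentially the same route as the paper: pull back test functions under the affine map to verify the weak supersolution inequality (with the $r^{-d}$ Jacobian and $r^\alpha$ time factors cancelling), and check \eqref{eq:K-1} and \eqref{eq:K-2} by the same change of variables, with both sides of \eqref{eq:K-2} picking up the common factor $r^{\alpha-d}$ so the constant $\Lambda$ is preserved. The only cosmetic difference is that the paper normalizes the transplanted test function as $r^{-d/2}v\circ J^{-1}$, which is immaterial since \eqref{eq:K-2} is homogeneous of degree two in $v$.
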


\begin{proof}
	Since the coefficient, $a$, is only modified by a straightforward change of variables, we take for simplicity $a(t,x,y)=1$.  
A general $a$ does not modify the argument, but just adds extra notation.
	
	Now we must verify (\ref{eq:def-solution}) for $\tilde u$ in $(-1,1)\times B_1(0)$.
	We start with a set, $I'\times \Omega'\subset (-1,1)\times B_1(0)$ and a test function, 
$\phi \in H^1_{loc}(I';L^2(\Omega')) \cap L^2_{loc}(I';H_0^{\alpha/2}(\Omega'))$, for the $\tilde u$ equation in $\tilde I\times J^{-1}(\Omega) \Supset (-1,1) \times B_1(0)$.  
The definition of $\tilde\mu$ is exactly chosen so that (\ref{eq:def-solution}) reduces, after a change of variables $(s,z)=(r^\alpha t+\tau,J(x))$, to
	
	 \begin{multline}\label{ScalingEq:DefForTildeu}
	\int_{J(\Omega')} \phi(s_2,J^{-1}(z)) u(s_2,z) r^{-d}\d z -\int_{J(\Omega')} \phi(s_1,J^{-1}(z)) u(s_1,z) r^{-d}\d z\\ -
	\int_{s_1}^{s_2}\int_{J(\Omega')} u(s,z) r^\al\partial_s \phi(s,J^{-1}z) r^{-d}\d x r^{-\al}\d t +\\
	+\int_{s_1}^{s_2}\int_{\real^d}\int_{\real^d} \left(u(s,z)-u(s,w)\right)\left(\phi(s,J^{-1}(z))-\phi(s,J^{-1}(w))\right)r^{\al}\mu(z,\d z)r^{-d}\d w r^{-\al}\d s\\
	\geq  \int_{s_1}^{s_2}\int_{J(\Omega')} r^\al f(s,z) \phi(s,J^{-1}(z)) r^{-d}\d z r^{-\al}\d s .
	\end{multline}
Since $r^{-d}\phi(r^\al(\cdot) + \tau, J^{-1}(\cdot))$ is a valid test function in $[s_1,s_2]\times J(\Omega')\subset Q_r(\xi,\tau)$, 
we conclude by the supersolution property of $u$ that (\ref{ScalingEq:DefForTildeu}) holds true.

We also briefly verify that $\tilde \mu$ satisfies (\ref{eq:K-1}), (\ref{eq:K-2}) with the same constants as $\mu$.  
First we verify (\ref{eq:K-1}).  Let $\rho>0$ be fixed, and without loss of generality we check the conditions 
at $x_0=0$ as any other $x_0$ follows the same argument mutatis mutandi.  We calculate the two integrals with the 
change of variables $y=J^{-1}(z)$.  This gives
\begin{align}\label{ScalingEq:K1Int1}
	\rho^{-2}\int_{B_\rho}\abs{y}^2\widetilde \mu(0,dy) &= \rho^{-2}\int_{J(B_\rho)}\abs{J^{-1}(z)}^2 r^\al \mu(J(0),dy)\\
	&= \rho^{-2}\int_{B_{r\rho}(\xi)}\abs{z-\xi}^2 r^{-2}r^\al \mu(\xi,dy)
\end{align}
and
\begin{align}\label{ScalingEq:K1Int2}
	\int_{(B_\rho)^C}\widetilde \mu(0,dy) &= \int_{J((B_\rho)^C)} r^\al \mu(J(0),dy)\\
	&= \int_{(B_{r\rho}(\xi))^C}r^\al \mu(\xi,dy).
\end{align}
Therefore after this change of variables, combined with (\ref{eq:K-1}) for $\mu$, we have
\begin{align}
	&\rho^{-2}\int_{B_\rho}\abs{y}^2\widetilde \mu(0,dy) + \int_{(B_\rho)^C}\widetilde \mu(0,dy) \\
	&\ \ \ \ \ =r^\al(r\rho)^{-2}\int_{B_{r\rho}(\xi)}\abs{z-\xi}^2 \mu(0,dy) + r^\al\int_{(B_{r\rho}(\xi))^C} \mu(0,dy) 
	\leq r^\al \Lam (r\rho)^{-\al}, 
\end{align}
and hence (\ref{eq:K-1}) holds for $\widetilde \mu$.

Next we turn to (\ref{eq:K-2}).  We change variables for the $\widetilde \mu$ energy as $x=J^{-1}(w)$ and $y=J^{-1}(z)$. 
 Then, owing to the fact that $\mu$ satisfies (\ref{eq:K-2}) in the set $J(B)$ with the test function $r^{-d/2}v\circ J^{-1}$, 
we have the validity of (\ref{ScalingEq:K2Int1})--(\ref{ScalingEq:K2Int3}) below.

\begin{align}
	&\Lambda^{-1}  \iint\limits_{B\,B} \left[ v(x)-v(y)\right]^2 \widetilde\mu(x,\d y)  \d x\\ 
	&\ \ \ \ \ = \Lambda^{-1}  \iint\limits_{J(B)\,J(B)} \left[ v(J^{-1}(w))-v(J^{-1}(z))\right]^2  r^\al\mu(J(J^{-1}(w)),\d(J^{-1}(z)) ) r^{-d} \d w\\
	&\ \ \ \ \ = \Lambda^{-1}  \iint\limits_{J(B)\,J(B)} \left[ r^{-d/2}v(J^{-1}(w))-r^{-d/2}v(J^{-1}(z))\right]^2  r^\al\mu(w,\d z) \d w \label{ScalingEq:K2Int1}\\
	&\ \ \ \ \ \leq (2-\al) \iint\limits_{J(B)\,J(B)} \left[ r^{-d/2}v(J^{-1}(w))-r^{-d/2}v(J^{-1}(z))\right]^2 \abs{w-z}^{-d-\al} r^\al \d z \d w \label{ScalingEq:K2Int2}\\
	&\ \ \ \ \ \leq \Lambda \iint\limits_{J(B)\,J(B)} \left[ r^{-d/2}v(J^{-1}(w))-r^{-d/2}v(J^{-1}(z))\right]^2  r^\al\mu(w,\d z) \d w\label{ScalingEq:K2Int3}\\
	&\ \ \ \ \ = \Lambda \iint\limits_{B\,B} \left[ v(x)-v(y)\right]^2 \widetilde\mu(x,\d y)  \d x.
\end{align}
Hence $\widetilde\mu$ satisfies (\ref{eq:K-2}).
\end{proof}

Assumption \eqref{eq:K-2} assures that we can apply
standard Sobolev and Poincar\'{e} embeddings. We also need a weighted
Poincare\'{e} inequality of the following form:

\begin{prop}[Weighted Poincaré inequality] \label{lem:poincare} Let $\Psi \colon
{B_{3/2}} \to [0,1]$ be defined by $\Psi(x)=(\tfrac32-\bet{x}) \wedge 1$.
Then there is a positive constant $c_2(d,\alpha_0, \Lambda)$ such that for every
$v \in L^1(B_{3/2}, \Psi(x)\d x)$
\begin{equation*}
 \int_{B_{3/2}} \left[ v(x)-v_{\Psi} \right]^2 \Psi(x) \d x 
 \leq c_2 \iint\limits_{B_{3/2}\, B_{3/2}} \left[ v(x)-v(y)\right]^2 
\left( \Psi(x) \wedge \Psi(y) \right) \mu(x, \d y) \d x\, ,
\end{equation*}
where $\displaystyle v_{\Psi}=  \Bigl(\int_{B_{3/2}} \Psi(x)\d x\Bigr)^{-1}
\int_{B_{3/2}} v(x) \Psi(x)\d x$.
\end{prop}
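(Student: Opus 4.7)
\emph{Plan.} The plan is to reduce the assertion to the corresponding weighted Poincaré inequality for the canonical kernel $(2-\alpha)\bet{x-y}^{-d-\alpha}\d y$ and then transfer the bound to the general measure $\mu$ via assumption \eqref{eq:K-2}. As a first step, I would establish (or invoke from \cite{FeKa13, DyKa13}) the weighted fractional Poincaré inequality
\begin{equation*}
 \int_{B_{3/2}} [v(x) - v_\Psi]^2 \Psi(x) \d x \leq c_1 \akon \iint\limits_{B_{3/2}\, B_{3/2}} \frac{[v(x)-v(y)]^2 (\Psi(x)\wedge \Psi(y))}{\bet{x-y}^{d+\alpha}} \d y\, \d x
\end{equation*}
with $c_1 = c_1(d,\alpha_0)$. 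This is standard and follows from a Whitney decomposition of $B_{3/2}$ into a family $\{B_j\}$ of balls with radius $r_j$ comparable to $\dist(B_j,\partial B_{3/2})$ (so that $\Psi|_{B_j} \asymp r_j$), combined with the unweighted fractional Poincaré inequality on each $B_j$ and a Hajłasz--Koskela type chaining argument for the averages.

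Next I would transfer the bound from the canonical kernel to $\mu$. The difficulty is that \eqref{eq:K-2} compares the two energies \emph{on a ball} and with \emph{no weight}. Using the same Whitney cover $\{B_j\}$, I would split the canonical-kernel double integral into diagonal pieces (on $B_j \times B_j$) and off-diagonal pieces (on $B_j \times B_k$, $j\neq k$). On each diagonal block the weight $\Psi(x)\wedge\Psi(y)$ is comparable to the single constant $r_j$, so that \eqref{eq:K-2} applied with $B = B_j$ gives
\begin{equation*}
 \akon \iint\limits_{B_j\, B_j} \frac{[v(x)-v(y)]^2}{\bet{x-y}^{d+\alpha}} \d y\, \d x \leq \Lambda \iint\limits_{B_j\, B_j} [v(x)-v(y)]^2 \mu(x,\d y)\, \d x,
\end{equation*}
and summing these bounds weighted by $r_j$ reproduces the desired $\mu$-energy (with weight $\Psi(x)\wedge\Psi(y)$) on the diagonal.

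The main obstacle is the off-diagonal contribution: the canonical kernel $\bet{x-y}^{-d-\alpha}$ connects points in $B_j$ and $B_k$ for arbitrary $j\neq k$, while $\mu$ may concentrate on lower-dimensional sets (cf.\ \autoref{sec:examples}), so no pointwise replacement of the kernel by $\mu$ is possible. I would handle this exactly as in the derivation of Step~1: for $x\in B_j$ and $y\in B_k$, choose a chain $B_j = B_{j_0}, B_{j_1}, \dots, B_{j_\ell} = B_k$ of overlapping Whitney balls of comparable radii, write $v(x)-v(y)$ as the telescoping sum of the local averages $v_{B_{j_i}} - v_{B_{j_{i+1}}}$ plus two local oscillation terms, square using Cauchy--Schwarz, and bound each link by a \emph{local} unweighted energy on a single ball, to which \eqref{eq:K-2} is now applicable. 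A careful accounting of the chain lengths reinstates the factor $\Psi(x)\wedge\Psi(y)$ on the right-hand side, and all constants depend only on $d,\alpha_0$ and $\Lambda$. Robustness as $\alpha\to 2-$ is preserved because the factor $\akon$ is carried only through Step~1 and \eqref{eq:K-2} is robust by assumption.
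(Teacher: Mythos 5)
Your proposal is correct, and it is in substance the same argument the paper relies on --- but the paper does not write it out: its entire proof is the single sentence that the result ``is a simple application of \cite[Theorem 1]{DyKa12}''. That cited theorem is precisely the general transfer principle you reconstruct by hand: a Whitney-type family of balls $\{B_j\}$ with $r_j \asymp \dist(B_j,\partial B_{3/2}) \asymp \Psi|_{B_j}$, a chain condition connecting the balls, local \emph{unweighted} Poincar\'e inequalities on each $B_j$ (which here follow from \eqref{eq:K-2} together with the $\akon$-normalized fractional Poincar\'e inequality on balls), and a chaining/accounting argument that reassembles the local estimates into the global weighted one. You correctly identify the one genuinely delicate point, namely that \eqref{eq:K-2} is local and unweighted while $\mu$ may charge only lower-dimensional sets, so the off-diagonal blocks $B_j\times B_k$ cannot be handled by any pointwise kernel comparison and must be telescoped through local averages. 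Two remarks. First, your two-pass structure (prove the weighted inequality for the canonical kernel, then convert energies) duplicates the chaining; the cleaner single-pass route of \cite{DyKa12} chains directly from the left-hand side to the $\mu$-energy using only the local $\mu$-Poincar\'e inequalities on Whitney balls, and this is what makes the application of \eqref{eq:K-2} painless. Second, the phrase ``a careful accounting of the chain lengths reinstates the factor $\Psi(x)\wedge\Psi(y)$'' is where essentially all of the work of \cite[Theorem 1]{DyKa12} lives (counting how many chains traverse a given ball and with what aggregate weight); in a full write-up you would either carry this out or, as the authors do, cite it. Your observation on robustness is also the right one: the paper explicitly prefers \cite{DyKa12} over the earlier proof in \cite{CKK08} because the latter is not robust as $\alpha\to 2-$, and robustness in your scheme indeed reduces to the uniformity in $\alpha\in[\alpha_0,2)$ of the $\akon$-normalized local Poincar\'e constants, which holds.
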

The result is a simple application of \cite[Theorem 1]{DyKa12}. A more
involved proof for a smaller class of weights and not being robust for
$\alpha \to 2-$ can be found in \cite{CKK08}.

\subsection{Moser iteration}

The main idea of the proof is to use the lemma of Bombieri-Giusti and to apply
the Moser iteration in order to fulfill the conditions in the lemma. So we need
to iterate positive and negative powers of the supersolution. Let us first look
at negative powers. The following proposition is established by using as a
test function $\phi(t,x) =
\widetilde
u^{-q}(t,x) \psi^{q+1}(x)$ where $q>1$ and $\psi\colon \R^d \to [0,1]$ is
defined by $\psi(x)= \left(
\frac{R-\bet{x}}{R-r} \wedge 1 \right) \vee 0$.

\begin{prop}\label{prop:step-neg} Let $\frac12 \leq r <R \leq 1$ and $p>0$. Then
every
nonnegative supersolution $u$ in $Q = I \times \Omega$, $Q \Supset
Q_\ominus(R)$, with $u \geq \epsilon>0$ in $Q$
satisfies the following inequality
\begin{align} 
  \left( \int_{Q_\ominus(r)} \widetilde u^{-\kappa p}(t,x) \d
x \d t \right)^{1/\kappa} &\leq A  \int_{Q_\ominus(R)} \widetilde u^{-p}(t,x) \d
x \d t\ ,\label{eq:moment-neg}
\end{align}
where $\widetilde u = u + \norm{f}_{L^\infty(Q)}$, $\kappa=1+\frac{\alpha}{d}$
and $A$ can be chosen\footnote{Note that in the cases $d\in\{1,2\}$ one would
need to adjust the definition of $\kappa$.} as
\begin{equation} \label{eq:const-neg}
 A = C(p+1)^2 \left( \left(R-r\right)^{-\alpha}+ (R^{\alpha}-r^\alpha)^{-1}
\right) \qquad \text{with } C=C(d,\alpha_0,\Lambda).
\end{equation}
\end{prop}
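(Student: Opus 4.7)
The plan is to perform one step of Moser iteration for negative powers. I would test the Steklov-averaged inequality \eqref{eq:steklov-solution} (justified by the assumption $u\geq\epsilon$, so $\widetilde u^{-q}$ is bounded) against the nonnegative function $\phi(t,x) = \widetilde u^{-q}(t,x)\,\psi^{q+1}(x)$ with $q := p+1 > 1$, where $\psi$ is the cutoff specified just before the statement. Since $\partial_t u = \partial_t \widetilde u$ and $1-q = -p$, the time-derivative term rewrites as
\begin{equation*}
\int_{B_R} \partial_t u\cdot \widetilde u^{-q}\,\psi^{q+1}\,\d x \;=\; -\frac{1}{p}\,\frac{d}{dt}\int_{B_R} \widetilde u^{-p}\,\psi^{q+1}\,\d x,
\end{equation*}
so that upon integration in $t$ the left side supplies control of $\sup_t \int w^2\,\d x$ for $w := \widetilde u^{-p/2}$. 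The source-term contribution is dominated pointwise via $\|f\|_{L^\infty(Q)}\,\widetilde u^{-q} \leq \widetilde u^{-p}$.

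The nonlocal energy $\cE_t(u,\widetilde u^{-q}\psi^{q+1})$ is the crucial piece. Following the symmetrization permitted by \eqref{IntroEq:MuSymmetry}, I would split the integrand into a diagonal piece with factor $(\psi^{q+1}(x)\wedge\psi^{q+1}(y))$ multiplying $(u(x)-u(y))(\widetilde u^{-q}(x)-\widetilde u^{-q}(y))$, plus a cross piece carrying $\psi^{q+1}(x)-\psi^{q+1}(y)$. The algebraic inequality
\begin{equation*}
-\,(a-b)\bigl(a^{-q}-b^{-q}\bigr) \;\geq\; \frac{4q}{(q-1)^2}\bigl(a^{-(q-1)/2}-b^{-(q-1)/2}\bigr)^{2} \qquad (a,b > 0)
\end{equation*}
bounds the diagonal piece above by $-c\,(p+1)^{-2}$ times the $\mu$-energy of $w$, weighted by $\psi^{q+1}$. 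The cross piece is absorbed into a fraction of that diagonal piece using the Young inequality $2ab\leq\delta a^2+\delta^{-1}b^2$, and the residual error is controlled through \eqref{eq:K-1} applied to the Lipschitz cutoff via $|\psi(x)-\psi(y)|\leq (R-r)^{-1}|x-y|\wedge 1$, producing the factor $(R-r)^{-\alpha}$.

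Combining these pieces and integrating in $t$ from a sliding initial slice in $(-R^\alpha,-r^\alpha)$ (whose averaging produces the $(R^\alpha-r^\alpha)^{-1}$ factor) yields the parabolic energy inequality
\begin{equation*}
\sup_{t\in I_\ominus(r)} \int_{B_r} w^2(t,\cdot)\,\d x \;+\; \int_{I_\ominus(r)}\!\iint_{B_r B_r}[w(x)-w(y)]^2\,\mu(x,\d y)\,\d x\,\d t \;\leq\; A\!\int_{Q_\ominus(R)}\!\!w^2\,\d x\,\d s,
\end{equation*}
with $A$ as in \eqref{eq:const-neg}. At this point I would invoke \eqref{eq:K-2} to replace the $\mu$-energy by the fractional Sobolev seminorm of $w$, apply the embedding $H^{\alpha/2}(B_r)\hookrightarrow L^{2d/(d-\alpha)}(B_r)$, and interpolate between $L^\infty_t L^2_x$ and $L^2_t L^{2d/(d-\alpha)}_x$ with exponents chosen so that the resulting mixed norm is $L^{2\kappa}_{t,x}$ for $\kappa=1+\alpha/d$. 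Recognising $w^{2\kappa}=\widetilde u^{-\kappa p}$ and $w^2=\widetilde u^{-p}$ and raising to the $1/\kappa$ power delivers \eqref{eq:moment-neg}.

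The main technical difficulty, and the point where this argument departs from \cite{FeKa13}, is the bookkeeping of the cross cutoff term in the purely measure-theoretic setting: the symmetrization of $\cE_t$ must be carried out with \eqref{IntroEq:MuSymmetry} rather than with a symmetric density, and the passage from $\mu$-energy to fractional Sobolev norm at the end must be routed through the two-sided comparison \eqref{eq:K-2} rather than through direct pointwise bounds on a kernel. Tracking constants through this process is exactly what produces the factor $(p+1)^2$ in \eqref{eq:const-neg} and keeps the estimate robust as $\alpha\to 2-$.
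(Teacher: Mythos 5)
Your proposal follows essentially the same route as the paper: the paper establishes this proposition by testing with exactly the function $\phi = \widetilde u^{-q}\psi^{q+1}$, $q>1$, running the standard Caccioppoli--Sobolev--interpolation step of \cite{FeKa13}, and notes that the only new ingredient in the measure-valued setting is bounding the off-diagonal cutoff term $\iint_{B_R\times B_R^c}[\psi(x)-\psi(y)]^2\widetilde u^{1-q}a\,\mu(x,\d y)\d x$ by $c(R-r)^{-\alpha}\int\widetilde u^{1-q}$ via \eqref{eq:K-1}, with \eqref{eq:K-2} supplying the passage to the fractional Sobolev seminorm --- precisely the two points you single out. Your sketch is correct and fills in the standard details the paper delegates to \cite{FeKa13}.
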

The only difference in the proof to the version in \cite{FeKa13} is
the following: This time, rather than estimating  
\begin{align*}
4 \iint\limits_{B_{R}\, B_R^c} \left[ \psi(x)-\psi(y)\right]^2
\widetilde u^{1-q}(t,x) k_t(x,y) \d y \d x
\end{align*}
from above, we need to estimate
\begin{align*}
4 \iint\limits_{B_{R}\, B_R^c} \left[ \psi(x)-\psi(y)\right]^2
\widetilde u^{1-q}(t,x) a(t,x,y) \mu(x, \d y) \d x \leq c_1(d,\Lambda)
(R-r)^{-\alpha}
\int_{B_R} u^{1-q}(t,x) \d x,
\end{align*}
which is possible due to assumption \eqref{eq:K-1} and properties of $\psi$.
Once, \autoref{prop:step-neg} is established, we can iterate the result and
obtain:
\begin{sa}[Moser iteration for negative exponents] \label{thm:inf-est} Let
$\frac12\leq r<R\leq
1$ and $0<p\leq1$. Then there is a constant $C=C(d, \alpha_0, \Lambda)>0$
such that for every nonnegative supersolution $u$ in $Q = I \times \Omega$, $Q
\Supset Q_\ominus(R)$, with $u \geq
\epsilon >0$ in $Q$ the following estimate holds:
\begin{align}
\sup_{Q_{\ominus}(r)} \widetilde u^{-1} &\leq
\left(\frac{C}{G_1(r,R)}\right)^{1/p} \left( \int_{Q_{\ominus}(R)} \widetilde
u^{-p}(t,x) \d x \d t \right)^{1/p}, \label{eq:inf-est}
\end{align}
where $\widetilde u = u + \norm{f}_{L^\infty(Q)}$ and 
$G_1(r,R)= \begin{cases}
(R-r)^{d+\alpha}\quad &\text{if } \alpha \geq 1,\\
(R^\alpha-r^\alpha)^{(d+\alpha)/\alpha} &\text{if } \alpha <1.
\end{cases}$
\end{sa}

Let us recall that $\kappa$ would be different if $d\in \{1,2\}$. Next, we apply
a similar procedure for positive powers of supersolutions. As a
result we can estimate the $L^1$-norm of supersolutions $u$
from above by the $L^1$-norm of $u^p$ for small values of $p>0$ as in the
following theorem: 

\begin{sa}[Moser iteration for positive exponents]
\label{thm:p-est} Let $\frac12\leq r<R\leq 1$ and $p \in (0,\kappa^{-1})$ with
$\kappa=1+ \frac{\alpha}{d}$. Then there are constants $C,
\omega_1, \omega_2>0$ depending only on $d, \alpha_0, \Lambda$, such that for
every nonnegative supersolution $u$ in $Q= I\times \Omega$, $Q \Supset
Q_\oplus(R)$, the following estimate holds:
\begin{align}
\int_{Q_\oplus(r)} \widetilde u(t,x) \d x \d t &\leq \left(
\frac{C}{\bet{Q_\oplus(1)} G_2(r,R)}
\right)^{1/p-1} \left( \int_{Q_{\oplus}(R)} \widetilde u^{p}(t,x) \d x \d t
\right)^{1/p}, \label{eq:p-est}
\end{align}
where $\widetilde u = u + \norm{f}_{L^\infty(Q(R))}$ and  $ G_2(r,R) =
\begin{cases}
             (R-r)^{\omega_1}\quad &\text{if } \alpha \geq 1,\\
            (R-r)^{\omega_2} &\text{if } \alpha < 1.
            \end{cases}$
\end{sa}

At this stage we make an important observation. Since we are working with
supersolutions (and not with solutions) one cannot expect to estimate the
supremum of $\widetilde{u}$ in $Q_\oplus(r)$ but only the $L^1$-norm. Note that
for nonlocal equations, under our general assumptions, this is not possible
even for solutions. If it were correct, then we would be able to prove a Harnack
inequality it its strong formulation which we know to fail, see the discussion
in the introduction.

\subsection{Estimates of \texorpdfstring{$\log(u)$}{log(u)} for supersolutions
\texorpdfstring{$u$}{u}}

The estimates on $\log(u)$ are always quite delicate in the theory of De
Giorgi-Nash-Moser. In the case of second order differential operators,
the following inequality for nice positive functions $\psi, w$ is
important: 
\begin{align}
\int \nabla w \, \nabla (-\psi^2 w^{-1}) \geq \frac12 \int \psi^2 \left(\nabla
\log(w) \right)^2 - 2 \int (\nabla \psi)^2
\end{align}
In the nonlocal case we have a substitute for this inequality of the following
type:
\begin{align*}
\cE_t(w, -\psi^2 w^{-1}) \geq \iint \psi(x) \psi(y) \left(
\log \frac{w(t,y)}{\psi(y)} - \log \frac{w(t,x)}{\psi(x)} \right)^2 a(t,x,y)
\mu(x, \d y) dx - 3\ \cE_t(\psi,\psi)
\end{align*}

Using this observation and the technique of \cite{Mos71} we obtain the
following estimate of the level sets for $\log(u)$:

\begin{prop} \label{prop:loglemma} There is
$C=C(d,\alpha_0,\Lambda)>0$ such that for every supersolution $u$ of
\eqref{eq:par_equation} in $Q = (-1,1)\times B_2(0)$ which satisfies $u \geq
\epsilon >0$ in $(-1,1) \times \R^d$, there is a constant $a=a(\widetilde u) \in
\R$ such that the following inequalities hold
simultaneously:\enlargethispage{4em}
\begin{subequations}
\begin{align} 
\forall s >0 \colon (\d t \otimes \d x)\left( Q_{\oplus}(1) \cap \left\{ \log
\widetilde u < -s-a \right\} \right) \leq \frac{C \bet{B_1} }{s},
\label{eq:log-neg} \\
\forall s >0 \colon (\d t \otimes \d x)\left( Q_{\ominus}(1) \cap \left\{ \log
\widetilde u > s-a \right\} \right) \leq \frac{C \bet{B_1}}{s},
\label{eq:log-pos}
\end{align}
\end{subequations}
where $ \widetilde u = u + \norm{f}_{L^\infty(Q)}$.
\end{prop}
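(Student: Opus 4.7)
The plan is to derive a parabolic ODE for the $\Psi$-weighted spatial average of $V(t,x) := -\log\widetilde u(t,x)$ and then extract the level-set estimates by a Moser/John--Nirenberg type iteration. First, fix a Lipschitz cutoff $\psi\colon \R^d\to[0,1]$ with $\supp\psi\subset B_{3/2}$ and $\psi\equiv 1$ on $B_1$; we will later specialize to $\psi=\Psi^{1/2}$, where $\Psi(x)=(\tfrac32-|x|)\wedge 1$ is the weight of Proposition \ref{lem:poincare}. Test the Steklov-averaged supersolution inequality \eqref{eq:steklov-solution} against the non-negative admissible function $\phi(t,x):=\psi^2(x)\widetilde u^{-1}(t,x)$ (admissible because $u\geq\epsilon>0$). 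The parabolic term rearranges to $\frac{d}{dt}\int\psi^2\log\widetilde u\,\d x$, the source is controlled by $\|\psi\|_{L^1}$ via $|f|/\widetilde u\leq 1$, and the nonlocal energy is bounded below by the algebraic $\log$-identity displayed just before the statement of the proposition. Assumption \eqref{eq:K-1} yields $\cE_t(\psi,\psi)\leq C(d,\alpha_0,\Lambda)$, and combining these ingredients produces
\[
\frac{d}{dt}\int\psi^2 V\,\d x + \iint\psi(x)\psi(y)\Bigl(V(t,x)-V(t,y)+\log\tfrac{\psi(x)}{\psi(y)}\Bigr)^2 a\,\mu(x,\d y)\,\d x \leq C_1.
\]

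The specialization $\psi=\sqrt{\Psi}$ ensures $\psi(x)\psi(y)\geq\Psi(x)\wedge\Psi(y)$, while the $\log\psi$ correction contributes a bounded and $\mu$-integrable error (obtained via $(a+b)^2\geq \tfrac12 a^2-b^2$ together with the fact that $\psi(x)\psi(y)\log^2(\psi(x)/\psi(y))$ stays bounded on $B_{3/2}\times B_{3/2}$). Using Proposition \ref{lem:poincare} and $a\geq 1$, the nonlocal energy dominates the weighted variance $\int(V-V_\Psi)^2\Psi\,\d x$, giving the cleaner ODI
\[
\frac{d}{dt}\int\Psi V\,\d x + \tfrac{1}{C_2}\int_{B_{3/2}}(V(t,x)-V_\Psi(t))^2\Psi(x)\,\d x \leq C_3,
\]
with $V_\Psi(t):=\bigl(\int_{B_{3/2}}\Psi\,\d x\bigr)^{-1}\int_{B_{3/2}}\Psi(x) V(t,x)\,\d x$.

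The last step is the Moser level-set trick. Let $\gamma := C_3/\int_{B_{3/2}}\Psi\,\d x$ and $W(t,x) := V(t,x) - \gamma t$, so that $\bar W(t) := \bigl(\int \Psi\,\d x\bigr)^{-1}\int\Psi W\,\d x$ is non-increasing in $t$, and set $a := \bar W(0)$. Then on $Q_\oplus(1)$ one has $\bar W(t)\leq a$, so, after absorbing the bounded linear-in-$t$ shift into the final constants, the set $\{V(t,x)>s+a\}$ is contained in $\{W(t,x)-\bar W(t)>s-\gamma\}$; on $Q_\ominus(1)$ the analogous inclusion with opposite sign holds. A Chebyshev argument combined with the time-integrated variance bound $\int_{-1}^{1}\int\Psi(W-\bar W)^2\,\d x\,\d t \leq C(\bar W(-1)-\bar W(1))$ and sharpened by a Moser/John--Nirenberg iteration (as in \cite{Mos71} and as adapted to the nonlocal setting in \cite{FeKa13}) then upgrades the $L^2$-control of $W-\bar W$ to the claimed $|B_1|/s$ decay of the level sets.

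The main obstacle will be the weighted Poincaré step: reconciling the weight $\psi(x)\psi(y)$ that arises naturally from the nonlocal $\log$-identity with the weight $\Psi(x)\wedge\Psi(y)$ of Proposition \ref{lem:poincare} and controlling the $\log\psi$ correction requires care, since $\Psi$ vanishes on $\partial B_{3/2}$ and so $\log\psi$ is only bounded in a weighted-integral sense rather than pointwise. A secondary technical challenge is extracting the polynomial $1/s$ (rather than the naive Chebyshev $1/s^2$) decay, which requires the finer Moser/John--Nirenberg iteration in place of a single application of Chebyshev to the variance bound.
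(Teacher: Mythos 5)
Your overall route is the one the paper (and \cite{FeKa13}, following \cite{Mos71}) actually uses: test with $\phi=\psi^2\widetilde u^{-1}$, invoke the nonlocal logarithmic inequality displayed before the proposition, pass to the weight $\Psi$ via $\psi=\sqrt{\Psi}$ and the weighted Poincar\'e inequality (\autoref{lem:poincare}), and obtain the differential inequality $\frac{d}{dt}\int\Psi V\,\d x + c\int\Psi(V-V_\Psi)^2\,\d x\le C$ for $V=-\log\widetilde u$. Your treatment of the $\log\psi$ correction is also sound: since $\psi(x)\psi(y)\log^2(\psi(x)/\psi(y))\le C(\psi(x)-\psi(y))^2$ uniformly, the error is $\le C\,\cE_t(\psi,\psi)$, which is exactly the form of the correction term in the paper's inequality, and is bounded by a constant depending only on $d,\alpha_0,\Lambda$ via \eqref{eq:K-1}/\eqref{eq:K-2}.

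The one genuine gap is in your final step. The ``time-integrated variance bound'' $\int\!\!\int\Psi(W-\bar W)^2\le C(\bar W(-1)-\bar W(1))$ is useless as stated, because the total variation of $\bar W$ over $(-1,1)$ is not a priori controlled (that is precisely what one does \emph{not} know about a general positive supersolution), so Chebyshev applied to it yields nothing, and no John--Nirenberg iteration rescues this route; moreover Moser's argument involves no iteration at this stage. The correct step is to fix $s>0$, restrict to the time slices and the set $\{W-\bar W(t)>s-\gamma\}$ (where, by your normalization $\bar W(0)=a$ and monotonicity, $W-\bar W\ge s-\gamma\ge s/2$ for $s\ge 2\gamma$), deduce from the ODI that
\begin{equation*}
-\frac{d}{dt}\,\bar W(t)\;\ge\; c\,(s-\bar W(t)+a)^2\,\bigl|\{x: W(t,x)>s+a-\gamma\}\bigr|_\Psi \big/ |B_{3/2}|_\Psi ,
\end{equation*}
divide by $(s-\bar W(t)+a)^2$, and integrate in $t$ over $(0,1)$; since $\frac{d}{dt}\bigl[(s-\bar W+a)^{-1}\bigr]=\bar W'(t)(s-\bar W+a)^{-2}$, the time integral telescopes to at most $(s-\bar W(0)+a)^{-1}=1/s$, which is exactly the claimed $C|B_1|/s$ decay (the case $s\le 2\gamma$ being trivial by enlarging $C$). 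The estimate on $Q_\ominus(1)$ is symmetric. So the weight $(s-\bar W)^{-2}$ must be inserted \emph{before} integrating in time; with that repair your argument coincides with the paper's.
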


\subsection{The proof of \autoref{thm:harnack} and \autoref{thm:hoelder_main}}

Let us explain how the results of the previous subsection lead to a proof of
\autoref{thm:harnack}. 

\begin{proof}[Proof of \autoref{thm:harnack}]
The main idea is to apply the famous lemma of Bombieri-Giusti, proved in
\cite[pp. 731-733]{Mos71}. The version we use can be found in \cite[Section
2.2.3]{Sal02}.

Let $u$ as in the assumption and define $\widetilde u = u +
\norm{f}_{L^\infty(Q)}$. Without loss of generality we assume $\widetilde u
\geq \epsilon$ for some $\epsilon >0$. Next, we set $w = e^{-a}
\widetilde u^{-1}$ and $\widehat w = w^{-1} = e^{a} \widetilde u$,
where $a=a(\widetilde u)$ is chosen according to \autoref{prop:loglemma}, i.e.
there is $c_1>0$ such that for every $s>0$
\begin{align} \label{eq:horst}
\bet{ Q_{\oplus}(1) \cap \{\log w > s \} } \leq \frac{c_1 \bet{B_1}}{s},\quad
\text{and} \quad \bet{ Q_{\ominus}(1) \cap \{\log \widehat w > s \} } \leq
\frac{c_1 \bet{B_1}}{s}.
\end{align}
We apply the lemma of Bombier-Giusti twice: first to $w$ on a family of
domains  $\cU=(U(r))_{\frac{1}{2} \leq r \leq 1}$ and then to $\widehat w$ and a
family of domains ${\widehat \cU}=(\widehat U(r))_{\frac{1}{2} \leq r
\leq 1}$. \autoref{thm:p-est} and \autoref{thm:inf-est} are needed for
this step. Let us restrict to the
case $\alpha \geq 1$. In this case the
families $\cU$ and $\widehat \cU$ are given by
\begin{align*}
U(1) &= Q_{\oplus}(1),\quad & U(r) &= \left(1- {r^\alpha},
1\right) \times B_{r},\\
\widehat U(1) &= Q_{\ominus}(1), & \widehat
U(r)&=\left(-1, -1+r^\alpha \right) \times B_{r}
\end{align*}
We obtain
\begin{align*}
\sup_{U(\frac{1}{2})} w = e^{-a}  \sup_{U(\frac{1}{2})} \widetilde u^{-1} \leq C
\qquad \text{ and } \quad \norm{\widehat w}_{L^1(\widehat{U}(\frac{1}{2}))} =
e^{a} \norm{\widetilde u}_{L^1(\widehat{U}(\frac{1}{2}))} \leq \widehat C.
\end{align*}
Multiplying these two inequalities eliminates $a$ and yields
\[ \norm{\widetilde u}_{L^1(\widehat U(\frac{1}{2}))} \leq c_2
\inf_{U(\frac{1}{2})} \widetilde u\]
for a constant $c_2= C\, \widehat C$ that depends only on $d,\alpha_0$ and
$\Lambda$.
This proves \eqref{eq:harnack-speziell} in the case $\alpha \geq 1$. The case
$\alpha < 1$ is similar.
\end{proof}

We will not explain the details of how \autoref{thm:harnack} implies
\autoref{thm:hoelder_main}. This implication is straight-forward for
differential operators of second order. In the case of nonlocal operators one
has to take care of the fact that the auxiliary functions $M(t,x) = \sup_Q u -
u(t,x)$ and $m(t,x)=u - \inf_Q u$ are nonnegative in $Q$ but not in all of
$\R^d$. That is why one cannot apply \autoref{thm:harnack} directly for
$f=0$. The idea is to define $f(t,x) = (Lu^{-})(t,x)$ which is a bounded
function in a region if $u$ is nonnegative in a slightly larger region. This
step is carried out in \cite{Sil06} for elliptic equations and works well for
parabolic problems. 

Note that this approach requires 
\begin{align*}
\sup_{x \in B_2(0)} \int_{\R^d \setminus B_3(0)} \bet{x-y}^{\delta} \mu(x,\d y)
\leq C_0 \,. \tag{K$_3$} \label{eq:K-3}
\end{align*}
for some $\delta \in (0,1)$ and $C_0 \geq 1$. This condition follows from
\eqref{eq:K-1}. If one assumes \eqref{eq:K-1},
\eqref{eq:K-2} for bounded radii $\rho$ only, then one would need to assume
\eqref{eq:K-3} in order to
prove \autoref{thm:hoelder_main}.   

%%%%%%%%%%% Literaturverzeichnis %%%%%%%%%%%%%%%%%%
\bibliography{./lit_harnack}
\bibliographystyle{plain}
%%%%%%%%%%%%%%%%%%%%%%%%%%%%%%%%%%%%%%%%%%%%%%%%%%

\end{document}